\numberwithin{equation}{section}
\title{\bf Stabilité des sous-algèbres paraboliques de $\mathfrak{so}(n)$.}
\date{Université Tunis El-Manar\\
Faculté des sciences de Tunis\\
Département de Mathématiques\\
Campus universitaire\\
2092 El-Manar, Tunis, Tunisie\\
kais.amari@math.univ-poitiers.fr\\
ammarikais10@yahoo.fr}
\author{ AMMARI KAIS }
\newtheorem{theo}[subsubsection]{Th\'eor\`eme}
\newtheorem{pr}[subsubsection]{Proposition}
\newtheorem{lem}[subsubsection]{Lemme}
\newtheorem{defi}[subsubsection]{D\'efinition}
\newtheorem{rema}[subsubsection]{Remarque}
\newtheorem{ex}[subsubsection]{Exemples}
\newtheorem{cons}[subsubsection]{Conséquence}
\def\ind{\rm ind\,}
\def\rang{\rm rang\,}
\def\text{ \rm }
\def\em{ \rm }
\begin{document}
\maketitle
\textbf{Abstract}. Let $\mathbb{K}$ be an algebraically closed field of characteristic 0. A finite dimensional Lie algebra $\mathfrak{g}$ over $\mathbb{K}$ is said to be stable if there exists a linear form $g\in\mathfrak{g}^{*}$ and a Zariski
 open subset in $\mathfrak{g}^{*}$ containing $g$ in which all elements
 have their stabilizers conjugated under the connected adjoint
 group. It is well known that any quasi-reductive Lie algebra is stable. However, there are stable
 Lie algebras which are not quasi-reductive. This raises the question,
 if for some particular class of non-reductive Lie algebras, there is equivalence
 between stability and quasi-reductivity. In particular, it was conjectured in \cite[conjecture 5.6 (ii)] {Panyushev2005} that these two notions are equivalent for biparabolic subalgebras of a reductive Lie algebra. In this paper, we prove this conjecture for parabolic subalgebras of orthogonal Lie algebras and we answer positively to this question for certain Lie
 algebras which stabilize an alternating bilinear form of maximal rank
 and a flag in generic position.\\
\\\textbf{Résumé}. Soit $\mathbb{K}$ un corps algébriquement clos de caractéristique nulle. Une algèbre de Lie de dimension finie $\mathfrak{g}$ définie sur $\mathbb{K}$ est dite stable si elle possède une forme linéaire régulière $g \in \mathfrak{g}^{\ast}$ admettant un voisinage pour la topologie de Zariski dans lequel les stabilisateurs de deux éléments sont conjugués par le groupe adjoint connexe. Il est bien connu qu'une algèbre de Lie quasi-réductive est stable. Cependant, il existe des algèbres de Lie stables qui ne sont pas quasi-réductives. Se pose la question de savoir, si pour certaines classes particulières d'algèbres de Lie non réductives, il y a équivalence entre ces deux notions. En particulier, il a été conjecturé dans \cite[conjecture 5.6 (ii)] {Panyushev2005} que ces deux notions sont équivalentes pour les sous-algèbres biparaboliques d'une algèbre de Lie réductive. Dans cet article nous allons démontrer cette conjecture pour les sous-algèbres paraboliques des algèbres de Lie orthogonales et nous répondons positivement à cette question pour certaines algèbres de Lie qui stabilisent une forme bilinéaire alternée de rang maximal et un drapeau en position générique.
\begin{center} \section{ \textbf{Introduction} } \end{center}
\ \\
Dans toute la suite, $\mathbb{K}$ désigne un corps algébriquement clos de caractéristique nulle. Les algèbres de Lie considérées sont définies et de dimension finie sur $\mathbb{K}$. Soit $\mathfrak{g}$ une algèbre de Lie algébrique et $G$ un groupe de Lie algébrique affine connexe d'algèbre de Lie $\mathfrak{g}$. On munit $\mathfrak{g}^{\ast}$, l'espace dual de $\mathfrak{g}$, des actions coadjointes de $\mathfrak{g}$ et de $G$. \'{E}tant donnée une forme linéaire $g\in \mathfrak{g}^{\ast}$, on note $\mathfrak{g}(g)$ son stabilisateur dans $\mathfrak{g}$. On identifie $\mathfrak{g}(g)/\mathfrak{z}$, o\`{u} $\mathfrak{z}$ désigne le centre de $\mathfrak{g}$, avec son image dans $\mathfrak{g}\mathfrak{l}(\mathfrak{g})$.\\
\begin{defi} Une forme linéaire $g \in \mathfrak{g}^{\ast}$ est dite de type réductif si son stabilisateur dans $\mathfrak{g}$ pour la représentation coadjointe, modulo $\mathfrak{z}$, est une algèbre de Lie réductive dont le  centre est formé d'éléments semi-simples dans $\mathfrak{g}\mathfrak{l}(\mathfrak{g})$.
\end{defi}De manière équivalente, cela revient à demander que le groupe $G(g)/Z_{\mathfrak{g}}$ soit réductif, o\`{u} $G(g)$ désigne  le stabilisateur de $g$  dans $G$ et $Z_{\mathfrak{g}}$  le centralisateur de $\mathfrak{g}$ dans $G$.\\
\begin{defi}Une algèbre de Lie $\mathfrak{g}$ est dite quasi-réductive si elle possède  une forme linéaire de type réductif $g \in \mathfrak{g}^{\ast}$.
\end{defi}La notion de quasi-réductivité a été introduite par Duflo dans \cite{duflo-1982} (voir également \cite{DKT})  pour son importance dans la théorie des représentations.\\
 Beaucoup d'algèbres de Lie sont quasi-réductives. Par exemple, si $\mathfrak{g}$ est réductive, $\mathfrak{g}$ est quasi-réductive puisque $0 \in \mathfrak{g}^{\ast}$ est de type réductif. De plus,  les sous-algèbres paraboliques d'une algèbre de Lie simple $\mathfrak{s}$ de type $A$ et $C$ sont quasi-réductives  d'après un résultat de Panyushev (voir \cite{Panyushev2005}). Par contre, ceci n'est plus vrai pour les sous-algèbres paraboliques pour $\mathfrak{s}=\mathfrak{s}\mathfrak{o}(n,\mathbb{K})$ avec $n\geq 7$. Récemment, Duflo, Khalgui et Torasso ont donné une caractérisation des sous-algèbres de Lie paraboliques de $\mathfrak{s}\mathfrak{o}(n,\mathbb{K})$ qui sont  quasi-réductives (voir \cite{DKT}).\\
\\Tauvel et Yu   ont étudié dans \cite[ch.40]{Tauvel-Yu} une classe d'algèbres de Lie reliée à celle des algèbres de Lie quasi-réductives.
\begin{defi}Une forme linéaire $g \in \mathfrak{g}^{\ast}$ est dite stable s'il existe un voisinage $V$ de $g$ dans $\mathfrak{g}^{\ast}$ tel que, pour toute forme linéaire $f\in V$, les stabilisateurs $\mathfrak{g}(g)$ et $\mathfrak{g}(f)$ soient conjugués par le groupe adjoint algébrique de $\mathfrak{g}$.
\end{defi}
\begin{defi}
Une algèbre de Lie est dite stable si elle admet une forme linéaire stable.
\end{defi}
La notion de stabilité a été introduite par Kosmann et Sternberg dans \cite{Kosmann1974}. Il est clair qu'une forme linéaire stable est régulière. Ceci a été notre principale motivation pour étudier les formes linéaires stables. D'autre part, si $\mathfrak{g}$ est une algèbre de Lie quasi-réductive, elle admet des formes linéaires régulières de type réductif lesquelles sont stables (voir \cite{DKT}). Une algèbre de Lie quasi-réductive est donc stable. Par contre, il existe des algèbres de Lie stables qui ne sont pas quasi-réductives (voir exemple \ref{ex}).\\
\\Dans \cite{Tauvel-Yu2004}, Tauvel et Yu  ont donn\'{e} un exemple d'une sous-alg\`{e}bre parabolique de $\mathfrak{s}\mathfrak{o}(8,\mathbb{K})$ qui n'admet aucune forme lin\'{e}aire stable qui est en particulier non quasi-réductive. La partie $\text{(ii)}$ de la conjecture $5.6$ de \cite{Panyushev2005} revient à affirmer qu'une sous-alg\`{e}bre biparabolique d'une alg\`{e}bre de Lie r\'{e}ductive est stable si et seulement si elle est quasi-réductive. Le but principal de ce travail est de montrer cette assertion pour le cas des sous-alg\`{e}bres paraboliques d'une alg\`{e}bre de Lie orthogonale (voir th\'{e}or\`{e}me \ref{stab de p}). Pour d\'{e}montrer ceci, nous utilisons les r\'{e}sultats de \cite{DKT} concernant la classification des sous-alg\`{e}bres paraboliques de $\mathfrak{s}\mathfrak{o}(n,\mathbb{K})$ qui sont quasi-r\'{e}ductives.\\
\\Dans \cite{Dvorsky2003}, Dvorsky introduit certaines algèbres de Lie, notées  $\mathfrak{r}_{\mathcal{V}}$,  qui stabilisent une forme bilinéaire alternée de rang maximal et un drapeau en position générique. Dans \cite{DKT}, Duflo, Khalgui et Torasso constatent que la classification des sous-algèbres de Lie paraboliques des algèbres de Lie orthogonales se ramène à déterminer parmi ces sous-algèbres de Lie  $\mathfrak{r}_{\mathcal{V}}$ celles qui sont quasi-réductives. Dans ce travail, on démontre qu'une algèbre de Lie $\mathfrak{r}_{\mathcal{V}}$  est stable si et seulement si elle est quasi-réductive (voir théorème \ref{stab de r}). Pour démontrer ceci, nous utilisons les résultats de \cite{DKT} concernant la classification de ces algèbres de Lie qui sont quasi-réductives.\\
\\Pour une algèbre de Lie algébrique $\mathfrak{g}$, nous ramenons l'étude de la stabilité de $\mathfrak{g}$ au cas de rang nul (voir proposition \ref{prop1} et conséquence \ref{cons}). Ceci permet de ramener également l'étude de la stabilité au cas de rang nul pour les sous-algèbres paraboliques de $\mathfrak{s}\mathfrak{o}(n,\mathbb{K})$ (voir \ref{red.parab}) ainsi que pour les algèbres de Lie $\mathfrak{r}_{\mathcal{V}}$ (voir preuve du théorème \ref{stab de r}).
\\

\textbf{Remerciements.} Je remercie mes encadreurs Pierre Torasso et Mohamed-Salah Khalgui qui ont proposé ce travail. Je remercie également Michel Duflo pour d'utiles remarques. Ce travail a bénéficié du soutien des universités de Tunis El-Manar (bourse d'alternance) et Poitiers, du projet CMCU franco-tunisien PHC G1504 "Théorie de Lie, Systèmes intégrables, analyse stochastique", ainsi que du projet Erasmus Mundus Al-Idrisi.

\section{Stabilit\'{e} des algèbres de Lie alg\'{e}briques}\label{première section}
Dans toute la suite, $\mathbb{K}$ est un corps commutatif algébriquement clos de caractéristique nulle. Par algèbre de Lie algébrique, nous entendons une algèbre de Lie qui soit l'algèbre de Lie d'un groupe algébrique affine connexe défini sur $\mathbb{K}$.\\
Si $\mathbf{G}$ est un groupe algébrique affine, on note $^{u}\mathbf{G}$ le radical unipotent de $\mathbf{G}$, $\mathfrak{g}$ son algèbre de Lie, $^{u}\mathfrak{g}$ l'algèbre de Lie de $^{u}\mathbf{G}$. Rappelons que $\mathbf{G}$ admet une décomposition de \textit{Levi}: il existe un sous-groupe réductif $\mathbf{R} \subset \mathbf{G}$, appelé sous-groupe de \textit{Levi} de $\mathbf{G}$, dont la classe de conjugaison modulo $^{u}\mathbf{G}$ est uniquement déterminée, tel que $\mathbf{G}= \mathbf{R}\,^{u}\mathbf{G}$. Au niveau des algèbres de Lie, on a la décomposition de \textit{Levi} $\mathfrak{g}=\mathfrak{r}\,\oplus\,^{u}\mathfrak{g} $. On dit que $\mathfrak{r}$ est un facteur réductif de $\mathfrak{g}$ et que $^{u}\mathfrak{g}$ est son radical unipotent.
\subsection{Formes fortement régulières}\label{1.1}

Soient $\mathfrak{g}$ une algèbre de Lie, $\mathfrak{g}^{\ast}$ son dual. On fait opérer $\mathfrak{g}$ dans $\mathfrak{g}^{\ast}$ au moyen de la représentation coadjointe. Ainsi, si $X, Y \in \mathfrak{g}$ et $g \in \mathfrak{g}^{\ast}$, on a $$(X.g) (Y)=g([Y, X]).$$
Avec les notations précédentes, on définit une forme bilinéaire alternée $\Phi_{g}$ sur $\mathfrak{g}$ par:$$\Phi_{g}(X,Y)=g([X,Y]).$$
Le noyau de $\Phi_{g}$ est égal à $\mathfrak{g}(g)$. L'entier $$\ind(\mathfrak{g})=\rm{inf}\{\dim\,\mathfrak{g}(g);\,\,g \in \mathfrak{g}^{\ast} \}$$
est appelé l'indice de $\mathfrak{g}$.\\
L'indice de $\mathfrak{g}$, noté $\ind \mathfrak{g}$, est donc la dimension minimale des stabilisateurs dans $\mathfrak{g}$ d'un élément de $\mathfrak{g}^{\ast}$ pour l'action coadjointe. Il a été introduit par Dixmier dans \cite{Dixmier} pour son importance dans la théorie des représentations et la théorie des orbites. De plus, l'indice de $\mathfrak{g}$ est le degré de transcendance du corps des fractions rationnelles $G$-invariantes sur $\mathfrak{g}^{\ast}$.\\
\begin{defi} Une forme linéaire $g\in \mathfrak{g}^{\ast}$ est dite régulière si la dimension de son stabilisateur dans $\mathfrak{g}$ pour l'action coadjointe est égale à l'indice de $\mathfrak{g}$.
\end{defi}
\begin{rema}
Il est bien connu que l'ensemble $\mathfrak{g}^{\ast}_{reg}$ des éléments réguliers de $\mathfrak{g}^{\ast}$ est un ouvert de Zariski non vide de $\mathfrak{g}^{\ast}$.
\end{rema}
\begin{defi}\label{}
Soit $\mathfrak{g}$ une algèbre de Lie algébrique et $\mathbf{G}$ un groupe algébrique d'algèbre de Lie $\mathfrak{g}$. Une forme linéaire $g \in \mathfrak{g}^{\ast}$ est dite fortement régulière si elle est régulière, auquel cas $\mathfrak{g}(g)$
est une algèbre de Lie commutative (voir \cite{duflo-vergne-1969}), et si de plus le tore $\mathfrak{j}_{g}$, unique facteur réductif de $\mathfrak{g}(g)$, est de dimension maximale lorsque $g$ parcourt l'ensemble des formes  régulières.
\end{defi}
Cette définition est due à Duflo (voir \cite{duflo-ip-1982}).
\begin{rema}\label{}
Il est bien connu que l'ensemble des formes fortement régulières  est un ouvert de Zariski $\mathbf{G}$-invariant non vide de $\mathfrak{g}^{\ast}$.\\
Les tores $\mathfrak{j}_{g}$, $g \in \mathfrak{g}^{\ast}$ sont appelés les sous-algèbres de Cartan-Duflo de $ \mathfrak{g}$. Ils sont deux \`{a} deux conjugués sous l'action du groupe adjoint connexe de $\mathfrak{g}$.
\end{rema}
Les deux résultats de la remarque précédente sont énoncés sans démonstration dans \cite{duflo-ip-1982} (pour la démonstration voir  \cite{charbonnel-1982}).
\begin{defi}\label{1.3.1}
Soit $\mathfrak{g}$ une algèbre de Lie algébrique sur $\mathbb{K}$. On appelle rang de $\mathfrak{g}$ sur $\mathbb{K}$ et on note $\rang \mathfrak{g}$ la dimension commune de ses sous-algèbres de Cartan-Duflo.
\end{defi}
Soit $\mathfrak{g}$ une algèbre de Lie algébrique sur $\mathbb{K}$, $\mathfrak{n}$ un idéal de $\mathfrak{g}$ contenu dans $^{u}\mathfrak{g}$, $g \in \mathfrak{g}^{\ast}$ une forme linéaire fortement régulière, $n$ la restriction de $g$ à $\mathfrak{n}$, $\mathfrak{h}$ le stabilisateur de $n$ dans $\mathfrak{g}$ et $h$ la restriction de $g$ sur $\mathfrak{h}$.
\begin{lem}\label{duf82} \emph{(Voir  \cite[I.16]{duflo-1982})} On garde les notations précédentes. Alors, on a\\
$(\text{i})$ $exp(\mathfrak{n}(n)).g=g+(\mathfrak{h}+\mathfrak{n})^{\perp}$.\\
$(\text{ii})$ $\mathfrak{h}(h)=\mathfrak{g}(g)+\mathfrak{n}(n)$.
\end{lem}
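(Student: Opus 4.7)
Le plan consiste à prouver d'abord (i), puis d'en déduire (ii) par un argument court.

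Pour (i), je commencerais par établir deux remarques clés. Comme $\mathfrak{n}$ est un idéal, pour $X \in \mathfrak{n}$ et $Z \in \mathfrak{n}$, on a $[Z,X] \in \mathfrak{n}$ et $g([Z,X]) = n([Z,X])$; on en déduit que $\mathfrak{n}(n) = \mathfrak{h} \cap \mathfrak{n}$ et que $\mathfrak{n} \cap \mathfrak{g}(g) = \mathfrak{n}(n) \cap \mathfrak{g}(g)$. À partir de là, une vérification directe montre que $\mathfrak{n}(n).g \subset (\mathfrak{h}+\mathfrak{n})^{\perp}$, et plus précisément que $(\mathfrak{h}+\mathfrak{n})^{\perp}$ est stable par l'action coadjointe de $\mathfrak{n}(n)$ (pour $Y \in \mathfrak{h}$ ou $Y \in \mathfrak{n}$ et $X \in \mathfrak{n}(n) \subset \mathfrak{h} \cap \mathfrak{n}$, le crochet $[Y,X]$ reste dans $\mathfrak{h}$ ou dans $\mathfrak{n}$). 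Puisque $\mathfrak{n}(n) \subset {}^{u}\mathfrak{g}$ est nilpotente, l'exponentielle est polynomiale et les itérés restent dans $(\mathfrak{h}+\mathfrak{n})^{\perp}$; on obtient donc $\exp(\mathfrak{n}(n)).g \subset g + (\mathfrak{h}+\mathfrak{n})^{\perp}$.

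Pour obtenir l'égalité, je procéderais par comptage de dimensions. Le noyau de l'application $X \mapsto X.g$ définie sur $\mathfrak{n}(n)$ est $\mathfrak{n}(n) \cap \mathfrak{g}(g) = \mathfrak{n} \cap \mathfrak{g}(g)$, donc $\dim \mathfrak{n}(n).g = \dim \mathfrak{n}(n) - \dim(\mathfrak{n} \cap \mathfrak{g}(g))$. D'autre part, $\dim(\mathfrak{h}+\mathfrak{n})^{\perp} = \dim \mathfrak{g} - \dim \mathfrak{h} - \dim \mathfrak{n} + \dim \mathfrak{n}(n)$. L'identité recherchée se ramène alors à $\dim \mathfrak{g} - \dim \mathfrak{h} = \dim \mathfrak{n} - \dim(\mathfrak{n} \cap \mathfrak{g}(g))$, qui s'obtient en examinant l'orbite coadjointe $\mathfrak{g}.n \subset \mathfrak{n}^{\ast}$: son stabilisateur est $\mathfrak{h} = \mathfrak{g}(n)$, et son annulateur dans $\mathfrak{n}$ est précisément $\{Z \in \mathfrak{n} : g([Z,X]) = 0 \;\; \forall X \in \mathfrak{g}\} = \mathfrak{n} \cap \mathfrak{g}(g)$. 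L'orbite $\exp(\mathfrak{n}(n)).g$ étant fermée et irréductible (orbite d'un groupe unipotent algébrique), contenue dans l'espace affine irréductible $g + (\mathfrak{h}+\mathfrak{n})^{\perp}$ et de même dimension, elle lui est égale.

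Pour (ii), l'inclusion $\mathfrak{g}(g) + \mathfrak{n}(n) \subset \mathfrak{h}(h)$ est immédiate: $\mathfrak{g}(g) \subset \mathfrak{h}$ puisque tout stabilisateur de $g$ stabilise en particulier $n$, et l'on vérifie directement que $\mathfrak{g}(g)$ et $\mathfrak{n}(n)$ sont tous deux inclus dans $\mathfrak{h}(h)$ en utilisant $\mathfrak{h} = \mathfrak{g}(n)$ et le fait que $\mathfrak{n}$ est un idéal. Réciproquement, pour $Y \in \mathfrak{h}(h)$, la forme linéaire $Y.g$ s'annule sur $\mathfrak{h}$ (par définition de $\mathfrak{h}(h)$, puisque $\mathfrak{h}$ est une sous-algèbre) et sur $\mathfrak{n}$ (car $Y \in \mathfrak{h} = \mathfrak{g}(n)$), donc $Y.g \in (\mathfrak{h}+\mathfrak{n})^{\perp}$. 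D'après (i), il existe $X \in \mathfrak{n}(n)$ tel que $X.g = Y.g$, d'où $Y - X \in \mathfrak{g}(g)$ et $Y \in \mathfrak{g}(g) + \mathfrak{n}(n)$.

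Le point le plus délicat est la preuve de (i): d'une part, l'identification de $\mathfrak{n}(n).g$ avec $(\mathfrak{h}+\mathfrak{n})^{\perp}$ via le calcul de dimensions, qui repose de manière essentielle sur l'idéalité de $\mathfrak{n}$; d'autre part, le passage du niveau infinitésimal au niveau global, qui nécessite d'invoquer la fermeture des orbites unipotentes et la stabilité de $(\mathfrak{h}+\mathfrak{n})^{\perp}$ sous l'action itérée de $\mathfrak{n}(n)$.
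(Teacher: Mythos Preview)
Votre d\'emonstration est correcte et compl\`ete. Notez toutefois que l'article ne donne pas de preuve propre de ce lemme~: il se contente de renvoyer \`a \cite[I.16]{duflo-1982}, de sorte qu'il n'y a pas de comparaison directe \`a effectuer avec une preuve interne au texte.

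Deux remarques mineures. D'abord, votre argument n'utilise nulle part l'hypoth\`ese de forte r\'egularit\'e de $g$~; c'est attendu, le r\'esultat de Duflo \'etant valable pour toute forme $g \in \mathfrak{g}^{\ast}$ (l'hypoth\`ese figure dans le contexte de l'article pour les usages ult\'erieurs, pas pour le lemme lui-m\^eme). Ensuite, dans (ii), vous dites invoquer (i) pour trouver $X \in \mathfrak{n}(n)$ tel que $X.g = Y.g$~; en r\'ealit\'e vous utilisez l'\'egalit\'e infinit\'esimale $\mathfrak{n}(n).g = (\mathfrak{h}+\mathfrak{n})^{\perp}$, que vous avez \'etablie comme \'etape interm\'ediaire du comptage de dimensions dans la preuve de (i), plut\^ot que l'\'enonc\'e global (i) lui-m\^eme. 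Ce n'est qu'un d\'etail de r\'edaction sans incidence sur la validit\'e.
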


\subsection{Algèbres de Lie quasi-réductives et  algèbres de Lie stables}\label{1.2}
\begin{defi}\label{}
Soit $\mathfrak{g}$ une algèbre de Lie algébrique.\\
$\text{(i)}$ Une forme linéaire $g \in \mathfrak{g}^{\ast}$ est dite de type réductif si le radical unipotent de $\mathfrak{g}(g)$ est central dans $\mathfrak{g}$.\\
$\text{(ii)}$ L'algèbre de Lie $\mathfrak{g}$ est dite quasi-réductive s'il existe une forme linéaire sur $\mathfrak{g}$ qui soit de type réductif.
\end{defi}
\begin{rema}\label{}
Soit $\mathfrak{g}$ une algèbre de Lie et $\mathfrak{z}$ son centre. Compte tenu de la définition précédente, il est clair qu'une forme linéaire $g \in \mathfrak{g}^{\ast}$ est de type réductif si et seulement si le radical unipotent de $\mathfrak{g}(g) $ qu'on note $^{u}(\mathfrak{g}(g))$ est égal au radical unipotent du centre $^{u}\mathfrak{z}$. Dans ce cas, $\mathfrak{g}(g)$ possède un unique facteur réductif, que l'on note $\mathfrak{r}_{g}$.\\
Soit $\mathbf{G}$ un groupe algébrique connexe d'algèbre de Lie $\mathfrak{g}$. Si une forme linéaire $g$ est de type réductif, il en est de m\^{e}me de toutes celles qui sont contenues dans son orbite sous l'action coadjointe de $\mathbf{G}$: l'orbite sous $\mathbf{G}$ d'une telle forme est dite de type réductif.
On voit alors qu'une forme linéaire $g \in \mathfrak{g}^{\ast}$ est dite de type réductif si et seulement si l'une des propriétés équivalentes suivantes est vérifiée:
\begin{eqnarray}
\begin{split}
&\mathbf{G}(g)/\mathbf{Z} \,\,\hbox {est  réductif },\\
&\mathfrak{g}(g)/\mathfrak{z}\,\,\, \hbox { est  réductive },\\
&^{u}(\mathbf{G}(g) )\subset \mathbf{Z},\\
&^{u}(\mathfrak{g}(g))\subset \mathfrak{z},\\
\end{split}
\end{eqnarray}
o\`{u} avec nos notations, $^{u}(\mathbf{G}(g))$ désigne le radical unipotent du stabilisateur $\mathbf{G}(g)$ et $\mathbf{Z}$ le centre de $\mathbf{G}$.
\end{rema}
\begin{rema}
Si $\mathfrak{g}$ est une algèbre de Lie quasi-réductive, les formes fortement régulières sont de type réductif (Voir\cite{DKT}), on voit donc que $\mathfrak{g}$ est quasi-réductive si et seulement si $\ind\mathfrak{g}=\rang\mathfrak{g}+\dim\,^{u}\mathfrak{z}$
\end{rema}
\begin{ex}\label{}
Voici quelques exemples d'alg\`{e}bres de Lie quasi-r\'{e}ductives.\\
\begin{itemize}
\item
Toute alg\`{e}bre de Lie r\'{e}ductive est quasi-r\'{e}ductive puisque la forme lin\'{e}aire $0 \in \mathfrak{g}^{\ast}$ est de type r\'{e}ductif.
\item
Toutes les sous-alg\`{e}bres de Borel d'une alg\`{e}bre de Lie simple sont  quasi-r\'{e}ductives ( Kostant \cite{kostant2011cascade}, non publi\'{e}, voir  \cite{joseph1977}).
\item
Les sous-alg\`{e}bres paraboliques d'une alg\`{e}bre de Lie simple $\mathfrak{s}$ de type $\mathrm{A}$ ou $\mathrm{C}$ sont quasi-r\'{e}ductives (Panyushev \cite{Panyushev2005}).
\end{itemize}
\end{ex}
\begin{rema} Pour le cas des sous-algèbres paraboliques de $\mathfrak{s}\mathfrak{o}(n, \mathbb{K})$, $n\geq 7$, Panyushev dans \cite{Panyushev2005} et Dvorsky dans \cite{Dvorsky2003} pr\'{e}sentent de nombreux exemples de sous-alg\`{e}bres paraboliques de $\mathfrak{s}\mathfrak{o}(n,\mathbb{K})$ qui sont quasi-r\'{e}ductives. Dans \cite{DKT}, on trouve une caract\'{e}risation des sous-alg\`{e}bres paraboliques de $\mathfrak{s}\mathfrak{o}(n,\mathbb{K})$ qui sont quasi-r\'{e}ductives. De plus, les sous-alg\`{e}bres paraboliques d'une alg\`{e}bre de Lie simple exceptionnelle $\mathfrak{s}$ ne sont pas toujours quasi-r\'{e}ductives. Dans \cite{Anne+Karin},  Baur et Moreau ont termin\'{e} la classification des sous-alg\`{e}bres paraboliques quasi-r\'{e}ductives dans le cas des alg\`{e}bres de Lie simples exceptionnelles.
\end{rema}
\begin{defi}\label{}
Soit $\mathfrak{g}$ une algèbre de Lie.\\
$\text{(i)}$ On dit que $f \in \mathfrak{g}^{\ast}$ est stable s'il existe un voisinage $W$ de $f$ dans $\mathfrak{g}^{\ast}$ tel que, pour tout $g \in W$, les stabilisateurs $\mathfrak{g}(f)$ et $\mathfrak{g}(g)$ soient $\mathbf{K}$-conjugués, o\`{u}  $\mathbf{K}$ désigne le groupe adjoint algébrique de $\mathfrak{g}$.\\
$\text{(ii)}$ Une algèbre de Lie $\mathfrak{g}$ est dite "stable" si elle admet une forme linéaire stable.
\end{defi}
\begin{lem}\label{Tauvel Yu}\emph{(Voir \cite{Tauvel-Yu2004})}
Soit $f$ un élément de $\mathfrak{g}^{\ast}$.\\
Si $\mathfrak{g}$ est une algèbre de Lie algébrique, f est stable si et seulement si $[\mathfrak{g},\mathfrak{g}(f)] \cap \mathfrak{g}(f)=\{0\}$.
\end{lem}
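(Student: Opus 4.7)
The strategy is to translate stability into a transversality condition and then recognize it as the claimed algebraic identity. I introduce the affine subspace $U = [\mathfrak{g},\mathfrak{g}(f)]^{\perp} \subseteq \mathfrak{g}^{\ast}$, whose points are precisely the $g \in \mathfrak{g}^{\ast}$ satisfying $\mathfrak{g}(f) \subseteq \mathfrak{g}(g)$. Since $\mathfrak{g} \cdot f = \mathfrak{g}(f)^{\perp}$ in $\mathfrak{g}^{\ast}$, taking orthogonals yields the elementary duality
$$\mathfrak{g}^{\ast} = \mathfrak{g} \cdot f + U \iff \mathfrak{g}(f) \cap [\mathfrak{g},\mathfrak{g}(f)] = \{0\},$$
so the lemma reduces to proving that $f$ is stable if and only if $\mathfrak{g}^{\ast} = \mathfrak{g} \cdot f + U$. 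I will work throughout with the morphism $\mu\colon \mathbf{K} \times U \to \mathfrak{g}^{\ast}$ defined by $(k,u) \mapsto k \cdot u$, whose differential at $(e,f)$ sends $(X,v)$ to $X \cdot f + v$ and thus has image $\mathfrak{g}\cdot f + U$.

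For the implication $(\Leftarrow)$, assume $\mathfrak{g}^{\ast} = \mathfrak{g} \cdot f + U$. Then $\mu$ is submersive at $(e,f)$, and its image contains a Zariski neighborhood $W$ of $f$. First I check that $f$ is regular: each $g = k\cdot u \in W$ has $\mathfrak{g}(g) = \mathrm{Ad}(k)\mathfrak{g}(u) \supseteq \mathrm{Ad}(k)\mathfrak{g}(f)$, so $\mathrm{ind}\,\mathfrak{g} \geq \dim \mathfrak{g}(f)$, and the reverse inequality is automatic. Once $f$ is regular, any regular $g = k \cdot u \in W$ satisfies $\dim \mathfrak{g}(u) = \mathrm{ind}\,\mathfrak{g} = \dim \mathfrak{g}(f)$, so $\mathfrak{g}(u) = \mathfrak{g}(f)$ and $\mathfrak{g}(g) = \mathrm{Ad}(k)\mathfrak{g}(f)$; hence $f$ is stable.

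For the implication $(\Rightarrow)$, assume $f$ stable. Since nearby stabilizers are $\mathbf{K}$-conjugate and thus have the same dimension, $f$ is regular. Set $V = U \cap \mathfrak{g}^{\ast}_{\mathrm{reg}}$, a Zariski-open subset of $U$ containing $f$, on which $\mathfrak{g}(v) = \mathfrak{g}(f)$ by the same dimension count. The stability neighborhood $W$ of $f$ has the property that each regular $g \in W$ lies in $\mathbf{K} \cdot V$ (conjugate its stabilizer onto $\mathfrak{g}(f)$); hence $\mathbf{K}\cdot V$ is Zariski-dense in $\mathfrak{g}^{\ast}$ and the restriction of $\mu$ to $\mathbf{K} \times V$ is dominant. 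By generic smoothness (characteristic zero), $d\mu$ is surjective at some $(k,v)$, and by $\mathbf{K}$-equivariance also at $(e,v)$, yielding $\mathfrak{g} \cdot v + U = \mathfrak{g}^{\ast}$.

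It remains to transfer this identity from $v$ to $f$, and it is here that the argument would otherwise stall. The key observation is that for every $v \in V$ one has $\mathfrak{g}(v) = \mathfrak{g}(f)$, hence
$$\mathfrak{g} \cdot v = \mathfrak{g}(v)^{\perp} = \mathfrak{g}(f)^{\perp} = \mathfrak{g} \cdot f,$$
so the identity $\mathfrak{g} \cdot v + U = \mathfrak{g}^{\ast}$ is in fact independent of $v \in V$ and therefore holds at $f$. Combined with the opening duality, this gives $\mathfrak{g}(f) \cap [\mathfrak{g},\mathfrak{g}(f)] = \{0\}$. I expect this transfer step to be the delicate one: dominance plus generic smoothness only produces a surjective differential somewhere on $V$, and the constancy of $\mathfrak{g}\cdot v$ on $V$ is precisely the ingredient that lets us conclude at $f$ itself.
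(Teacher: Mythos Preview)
The paper does not supply its own proof of this lemma; it merely cites Tauvel--Yu \cite{Tauvel-Yu2004} and uses the result as a black box. Your argument is correct and follows the natural route: the duality $\mathfrak{g}\cdot f + U = \mathfrak{g}^{\ast} \iff \mathfrak{g}(f)\cap[\mathfrak{g},\mathfrak{g}(f)]=\{0\}$ reduces the question to a transversality statement for the action map $\mu$, and both implications go through cleanly. The step you flagged as delicate --- transferring the surjectivity of $d\mu$ from a generic $(e,v)$ back to $(e,f)$ --- is indeed handled by the constancy of $\mathfrak{g}(v)$ (hence of $\mathfrak{g}\cdot v = \mathfrak{g}(v)^{\perp}$) on $V=U\cap\mathfrak{g}^{\ast}_{\mathrm{reg}}$, and this is the standard way to close the argument. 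One cosmetic point: $U=[\mathfrak{g},\mathfrak{g}(f)]^{\perp}$ is already a \emph{linear} subspace (it contains $0$), not merely affine, which is what makes $T_vU=U$ immediate.
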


\begin{rema}\label{}
Si $\mathfrak{g}$ est une algèbre de Lie quasi-réductive, les formes linéaires régulières de type réductif, qui sont exactement les formes fortement régulières, sont stables. Par suite, toute algèbre de Lie quasi-réductive est stable. Cependant, il existe des algèbres de Lie stables  qui ne sont pas quasi-réductives.
\end{rema}

\subsubsection{Exemple}\label{ex}
Voici un exemple d'algèbre de Lie stable qui n'admet aucune forme linéaire de type réductif.\\
Soit $\mathfrak{g}$ l'algèbre de Lie produit semi-direct de $\mathfrak{s}\mathfrak{o}(2)$ par $\mathbb{K}^{2}$ de sorte que $\mathfrak{g}=\mathfrak{s}\mathfrak{o}(2)\oplus\mathbb{K}^{2}$, le crochet étant donné par $$[t\,\, W+ v, z \,\,W+v^{\prime}]=t \,\,W.v^{\prime}- z\,\, W. v,\,\,\,t,\,z \in \mathbb{K},\,\, v,\,v^{\prime} \in \mathbb{K}^{2},$$
o\`{u} $\mathfrak{s}\mathfrak{o}(2)=\mathbb{K} W$, avec $W=\left(
                                                                                                                            \begin{array}{cc}
                                                                                                                              0 & 1 \\
                                                                                                                              -1 & 0 \\
                                                                                                                            \end{array}
                                                                                                                          \right)
$.
On vérifie aisément que le centre de $\mathfrak{g}$ est trivial. Par suite, $\mathfrak{g}$ est quasi-réductive si et seulement si $\ind\mathfrak{g}= \rang \mathfrak{g}$.\\
Soit $g \in \mathfrak{g}^{\ast}$, $n$ la restriction de $g$ au  radical unipotent $^{u}\mathfrak{g}=\mathbb{K}^{2}$ de $\mathfrak{g}$. On suppose que $n\neq 0$. Alors
 $$X= t\, W+ v\in \mathfrak{g}(g) \hbox{ si et seulement si pour tout }\, z \in\mathbb{K}\,\,et\,\,v^{\prime} \in\mathbb{K}^{2} \,\,\langle n,t \,W.v^{\prime}- z\, W. v\rangle=0.$$
Faisant  $z=0$, on voit que si $X \in  \mathfrak{g}(g)$, alors  pour tout $v^{\prime} \in \mathbb{K}^{2}$,  on a $$t \,\,\langle n, \,\,W.v^{\prime}\rangle=0.$$
Il suit alors de ce qui précède que $$X= t\,\, W+ v \in \mathfrak{g}(g)  \hbox{ si et seulement si } t=0 \hbox { et } \langle n, z\,\,W.v\rangle=0 \hbox{ pour tout } z\in \mathbb{K}.$$
Par suite, $\mathfrak{g}(g)=(W.n)^{\bot} \subset \ ^{u}\mathfrak{g}$. Ceci montre  que l'algèbre de Lie $\mathfrak{g}$ est non quasi-réductive.\\
D'autre part, soit $O=\{n \in (^{u}\mathfrak{g})^{\ast}/ \langle n, e_{1}\rangle^{2}+\langle n, e_{2} \rangle^{2} \neq 0\}$, avec $(e_{1}, e_{2})$ la base canonique de $\mathbb{K}^{2}$. On vérifie aisément que $O$ est un ouvert de Zariski non vide. Si $n,\,n^{\prime}\in O$, il existe $k \in \mathbf{S}\mathbf{O}(2)$ tel que $k. \mathbb{K}n=\mathbb{K}n^{\prime}$. Ceci montre que $\mathfrak{g}$ est stable.

\subsection{Réduction au cas de rang nul}\label{}
Nous allons maintenant énoncer le résultat suivant qui permet de ramèner l'étude de la stabilité des algèbres de Lie algébriques aux algèbres de Lie algébriques de \rang nul et qui nous sera utile pour la suite.\\
\\Soit $ \mathfrak{j}$ une sous-algèbre de Cartan-Duflo de $\mathfrak{g}$. On a $\mathfrak{g}=\mathfrak{g}^{\mathfrak{j}}\oplus[\mathfrak{j},\mathfrak{g}]$, o\`{u} $\mathfrak{g}^{\mathfrak{j}}$ désigne le centralisateur de $\mathfrak{j}$ dans $\mathfrak{g}$. On identifie le dual de l'algèbre de Lie $\mathfrak{g}^{\mathfrak{j}}$ à l'orthogonal de $[\mathfrak{j}, \mathfrak{g}]$ dans $\mathfrak{g}^{\ast}$ qui n'est autre que $\mathfrak{g}^{\ast \mathfrak{j}}$, l'ensemble des point fixes de $\mathfrak{j}$ dans $\mathfrak{g}^{\ast}$.

\begin{pr}\label{prop1}
Soit $\mathfrak{g}$ une algèbre de Lie algébrique, $\mathfrak{j}$ une sous-algèbre de Cartan-Duflo de $\mathfrak{g}$ et   $\mathfrak{g}^{\mathfrak{j}}$ le centralisateur de $\mathfrak{j}$ dans $\mathfrak{g}$. Alors
$$\mathfrak{g} \hbox{ est stable si et seulement si } \mathfrak{g}^{\mathfrak{j}} \hbox{ est stable  }.$$
\end{pr}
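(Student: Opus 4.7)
My plan rests on the Tauvel-Yu criterion (Lemme \ref{Tauvel Yu}): for an algèbre de Lie algébrique, a linear form $f$ is stable if and only if $[\mathfrak{g},\mathfrak{g}(f)]\cap\mathfrak{g}(f)=\{0\}$. Since the strongly regular forms constitute a non-empty Zariski open subset of $\mathfrak{g}^{\ast}$, the algebra $\mathfrak{g}$ is stable if and only if there exists a strongly regular stable form. Using the conjugacy of Cartan-Duflo subalgebras under the connected adjoint group, I may further arrange, up to such conjugation, that this form $g$ has $\mathfrak{j}$ as its reductive factor, equivalently $g\in\mathfrak{g}^{\ast\mathfrak{j}}$. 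Note that $\mathfrak{j}$ is central in $\mathfrak{g}^{\mathfrak{j}}$, so is itself the (unique) Cartan-Duflo subalgebra of $\mathfrak{g}^{\mathfrak{j}}$, and via the identification $\mathfrak{g}^{\ast\mathfrak{j}}\simeq(\mathfrak{g}^{\mathfrak{j}})^{\ast}$ of the setup, the relevant forms on each side correspond.

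The core step is to identify stabilizers and the obstruction $[\mathfrak{g},\mathfrak{g}(g)]\cap\mathfrak{g}(g)$ across the decomposition $\mathfrak{g}=\mathfrak{g}^{\mathfrak{j}}\oplus[\mathfrak{j},\mathfrak{g}]$. For a strongly regular $g\in\mathfrak{g}^{\ast\mathfrak{j}}$, the inclusion $\mathfrak{j}\subset\mathfrak{g}(g)$ is automatic, and since $\mathfrak{g}(g)$ is commutative (by regularity), it centralizes $\mathfrak{j}$; hence $\mathfrak{g}(g)\subset\mathfrak{g}^{\mathfrak{j}}$. Writing any $Y\in\mathfrak{g}$ as $Y_0+Y_1$ with $Y_0\in\mathfrak{g}^{\mathfrak{j}}$ and $Y_1\in[\mathfrak{j},\mathfrak{g}]$, the $\mathfrak{j}$-invariance of $g$ makes $g$ vanish on $[\mathfrak{j},\mathfrak{g}]$, and $\ad\mathfrak{g}(g)$ preserves this weight decomposition (since $\mathfrak{g}(g)\subset\mathfrak{g}^{\mathfrak{j}}$), so $[X,Y_1]\in[\mathfrak{j},\mathfrak{g}]$ for $X\in\mathfrak{g}(g)$. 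This yields $g([X,Y])=g([X,Y_0])$, hence the stabilizer identity $\mathfrak{g}(g)=\mathfrak{g}^{\mathfrak{j}}(\tilde g)$, where $\tilde g=g_{|\mathfrak{g}^{\mathfrak{j}}}$. The same invariance, combined with $\mathfrak{g}(g)\subset\mathfrak{g}^{\mathfrak{j}}$, gives
\[
[\mathfrak{g},\mathfrak{g}(g)]=[\mathfrak{g}^{\mathfrak{j}},\mathfrak{g}(g)]\oplus[[\mathfrak{j},\mathfrak{g}],\mathfrak{g}(g)],
\]
the two summands lying in $\mathfrak{g}^{\mathfrak{j}}$ and $[\mathfrak{j},\mathfrak{g}]$ respectively. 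Intersecting with $\mathfrak{g}(g)\subset\mathfrak{g}^{\mathfrak{j}}$ kills the second summand, so
\[
[\mathfrak{g},\mathfrak{g}(g)]\cap\mathfrak{g}(g)=[\mathfrak{g}^{\mathfrak{j}},\mathfrak{g}^{\mathfrak{j}}(\tilde g)]\cap\mathfrak{g}^{\mathfrak{j}}(\tilde g).
\]
Applying Lemme \ref{Tauvel Yu} on each side gives the pointwise equivalence: $g$ is stable in $\mathfrak{g}^{\ast}$ if and only if $\tilde g$ is stable in $(\mathfrak{g}^{\mathfrak{j}})^{\ast}$.

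The step I expect to require the most care is the genericity matching that turns this pointwise equivalence into equivalence of stability for the two algebras. In the direction $\mathfrak{g}$ stable $\Rightarrow\mathfrak{g}^{\mathfrak{j}}$ stable, the conjugation argument of the first paragraph directly produces a strongly regular stable $g\in\mathfrak{g}^{\ast\mathfrak{j}}$, and $\tilde g$ is then stable in $(\mathfrak{g}^{\mathfrak{j}})^{\ast}$. Conversely, starting from a stable $\tilde g\in(\mathfrak{g}^{\mathfrak{j}})^{\ast}$, I need the corresponding $g\in\mathfrak{g}^{\ast\mathfrak{j}}$ to be chosen strongly regular in $\mathfrak{g}$, which requires showing that the strongly regular forms of $\mathfrak{g}$ lying in $\mathfrak{g}^{\ast\mathfrak{j}}$ form a non-empty Zariski open subset of $\mathfrak{g}^{\ast\mathfrak{j}}$ whose image in $(\mathfrak{g}^{\mathfrak{j}})^{\ast}$ is Zariski dense among the strongly regular forms of $\mathfrak{g}^{\mathfrak{j}}$. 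This is the reduction to rank zero announced in the introduction, and rests on conjugacy of Cartan-Duflo subalgebras together with the equality of indices $\ind\mathfrak{g}=\ind\mathfrak{g}^{\mathfrak{j}}$; with it in hand the pointwise equivalence above closes both implications.
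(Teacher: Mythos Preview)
Your argument is correct and follows essentially the same route as the paper: both use the Tauvel--Yu criterion, restrict attention to strongly regular forms $g\in\mathfrak{g}^{\ast\mathfrak{j}}$, observe that $\mathfrak{g}(g)=\mathfrak{g}^{\mathfrak{j}}(g)\subset\mathfrak{g}^{\mathfrak{j}}$, and exploit the weight decomposition to get $[\mathfrak{g},\mathfrak{g}(g)]\cap\mathfrak{g}(g)=[\mathfrak{g}^{\mathfrak{j}},\mathfrak{g}(g)]\cap\mathfrak{g}(g)$. Your final paragraph overcomplicates the genericity step: the paper simply intersects the non-empty open set $(\mathfrak{g}^{\ast\mathfrak{j}})_s$ of stable forms for $\mathfrak{g}^{\mathfrak{j}}$ with the non-empty open set $\mathfrak{g}^{\ast}_r\cap\mathfrak{g}^{\ast\mathfrak{j}}$ (non-empty precisely because $\mathfrak{j}$ is a Cartan--Duflo subalgebra), and neither a density statement nor the index equality $\ind\mathfrak{g}=\ind\mathfrak{g}^{\mathfrak{j}}$ is needed.
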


\begin{proof}
Soit  $\mathfrak{g}$ une algèbre de Lie algébrique admettant des formes stables. Alors l'ensemble $\mathfrak{g}^{\ast}_{s}$ des formes stables est un ouvert de Zariski non vide. Par suite, $\mathfrak{g}^{\ast}_{s}\cap \mathfrak{g}^{\ast}_{r}$ est un ouvert de Zariski non vide, o\`{u} $\mathfrak{g}^{\ast}_{r}$ est l'ouvert de Zariski des formes fortement régulières.\\
Soit $g \in \mathfrak{g}^{\ast}_{s}\cap \mathfrak{g}^{\ast}_{r}$ et $\mathfrak{j} \subset \mathfrak{g}(g)$ le facteur réductif de $\mathfrak{g}(g)$. Alors $g \in \mathfrak{g}^{\ast \mathfrak{j}}$ et $g$ est une forme stable dans $\mathfrak{g}^{\ast \mathfrak{j}}$. En effet:\\
$$\mathfrak{g}^{ \mathfrak{j}}(g)=\mathfrak{g}(g) \,\,\,et\,\,\, [\mathfrak{g}^{ \mathfrak{j}},\mathfrak{g}(g)]\subset [\mathfrak{g},\mathfrak{g}(g)].$$
Réciproquement, soit $\mathfrak{j} \subset \mathfrak{g}$ une sous-algèbre de Cartan-Duflo de $\mathfrak{g}$ et supposons que $\mathfrak{g}^{\mathfrak{j}}$ admette des formes stables. Alors $(\mathfrak{g}^{\ast \mathfrak{j}})_{s}\cap \mathfrak{g}^{\ast}_{r}$ est un ouvert de Zariski non vide de $\mathfrak{g}^{\ast\mathfrak{j}}$. Soit $g \in (\mathfrak{g}^{\ast\mathfrak{j}})_{s}\cap \mathfrak{g}^{\ast}_{r}$. Alors on a  $$\mathfrak{g}(g)\cap [\mathfrak{g}, \mathfrak{g}(g)]=\mathfrak{g}(g)\cap [\mathfrak{g}, \mathfrak{g}(g)]^{\mathfrak{j}}=\mathfrak{g}(g)\cap [\mathfrak{g}^{\mathfrak{j}}, \mathfrak{g}(g)]=\{0\},$$
de sorte que $g$ est stable pour $\mathfrak{g}$. D'o\`{u} la proposition
\end{proof}

\begin{cons}\label{cons}
Soit maintenant $\mathfrak{g}^{\mathfrak{j}, 1}\subset \mathfrak{g}^{\mathfrak{j}}$ l'idéal orthogonal de $\mathfrak{j}$ pour la forme $(X,Y)\mapsto tr(XY)$. Alors $\mathfrak{g}^{\mathfrak{j}}=\mathfrak{j}\oplus \mathfrak{g}^{\mathfrak{j},1}$ et il est clair que $g \in \mathfrak{g}^{\ast \mathfrak{j}}$ est une forme stable si et seulement si $g^{\prime}=g|_{\mathfrak{g}^{\mathfrak{j},1}}$ est une forme stable pour $\mathfrak{g}^{\mathfrak{j},1}$.\\
Il suit  de ce qui précède que $$\mathfrak{g} \hbox{ est stable si et seulement si } \mathfrak{g}^{\mathfrak{j},1} \hbox{ est stable }$$
Comme $\mathfrak{g}^{\mathfrak{j},1}$ est de {\em rang} nul , on se ramène donc  \`{a} étudier la stabilité pour les algèbres de Lie de {\em rang} nul.
\end{cons}

\section{Cas des algèbres de Lie $\mathfrak{r}_{\mathcal{V}}$}\label{première section}
Dans \cite{DKT}, Duflo, Khalgui et Torasso constatent que la classification des sous-algèbres paraboliques quasi-réductives des algèbres de Lie simples classiques se ramène à determiner parmi les algèbres de Lie qui stabilisent une forme bilinéaire alternée de rang maximal et un drapeau en position générique, celles qui sont quasi-réductives. Le but de ce numéro est de caractériser parmi ces sous-algèbres celles qui sont stables.\\
\subsection{}\label{}
Soit $V$ un espace vectoriel de dimension finie sur $\mathbb{K}$ et soit $\mathcal{V}=\{ \{0\}=V_{0} \varsubsetneq V_{1} \varsubsetneq V_{2}\varsubsetneq...\varsubsetneq V_{t-1} \varsubsetneq V_{t}=V \}$ un drapeau de $V$. On désigne par $\mathfrak{q}_{\mathcal{V}}$ la sous-algèbre parabolique de $\mathfrak{g}\mathfrak{l}(V)$ constituée des endomorphismes qui laissent invariant $\mathcal{V}$ et par $\mathbf{Q}_{\mathcal{V}}$ le sous-groupe algébrique connexe de $\mathbf{G}\mathbf{L}(V)$ d'algèbre de Lie $\mathfrak{q}_{\mathcal{V}}$.
\begin{defi}\label{1.2}
Soient $V$ un espace vectoriel non nul de dimension finie sur $\mathbb{K}$, \,\,\,$\mathcal{V}=\{ \{0\}=V_{0} \varsubsetneq V_{1}\varsubsetneq V_{2}\varsubsetneq...\varsubsetneq V_{t-1} \varsubsetneq V_{t}=V\}$ un drapeau de $V$ et $\xi$ une forme bilinéaire alternée sur $V$. On dit que le drapeau $\mathcal{V}$ est générique relativement \`{a} $\xi$ ou que la forme $\xi$ est générique relativement \`{a} $\mathcal{V}$, si pour $1\leq i \leq t$, la restriction de $\xi$ \`{a} $V_{i}$ est de rang maximum, savoir $2 [\frac{1}{2}\dim V_{i}]$, et $V_{i}^{\bot_{\xi}} \cap V_{i-1}=\{0\}$ o\`{u} $V_{i}^{\bot_{\xi}}$ désigne l'orthogonal de $V_{i}$ dans $V$ relativement à  $\xi$.
\end{defi}
\begin{defi}\label{}
Soient $V$ un espace vectoriel non nul de dimension finie  sur $\mathbb{K}$, $b=(e_{1},...,e_{r})$ une base de $V$ et $\mathcal{V}=\{ \{0\}=V_{0} \varsubsetneq V_{1}\varsubsetneq V_{2}\varsubsetneq...\varsubsetneq V_{t-1} \varsubsetneq V_{t}=V\}$ un drapeau de $V$. On dit que la base $b$ est adaptée au drapeau $\mathcal{V}$, si pour tout $1\leq i\leq t$ l'espace  $V_{i}$ est engendré par la famille $(e_{1},...,e_{\dim V_{i}})$.
\end{defi}
\begin{lem}\label{1.2}\emph{(Voir \cite[Lemme 4.2.1]{DKT})}
Soit $V$ un espace vectoriel de dimension finie sur $\mathbb{K}$ et $\mathcal{V}=\{ \{0\}=V_{0} \varsubsetneq V_{1} \varsubsetneq V_{2}\varsubsetneq...\varsubsetneq V_{t-1} \varsubsetneq V_{t}=V\}$ un drapeau de $V$. Alors\\
$\text{(i)}$ une forme $\xi \in \bigwedge^{2}V^{\ast}$ est générique relativement \`{a} $\mathcal{V}$, si et seulement s'il existe une base $e_{1},...,e_{r}$ de $V$ adaptée au drapeau $\mathcal{V}$ telle que, notant $e_{1}^{\ast},...,e_{r}^{\ast}$ la base duale, on ait $\xi= \sum_{1\leq2i \leq r}e_{2i-1}^{\ast} \wedge e_{2i}^{\ast}$.\\
$\text{(ii)}$ l'ensemble des formes bilinéaires alternées $\xi$ sur $V$ qui sont génériques relativement \`{a} $\mathcal{V}$ est une orbite ouverte sous l'action de $\mathbf{Q}_{\mathcal{V}}$ dans $\bigwedge^{2}V^{\ast}$.
\end{lem}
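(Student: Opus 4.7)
The plan is to establish part (i) first, and then obtain part (ii) as a fairly easy consequence. For the ``normal form $\Rightarrow$ genericity'' direction of (i), one verifies directly: if $\xi=\sum_{1\leq 2k\leq r}e_{2k-1}^{\ast}\wedge e_{2k}^{\ast}$ in an adapted basis $(e_{1},\ldots,e_{r})$, then the matrix of $\xi|_{V_{i}}$ is block diagonal with $[\dim V_{i}/2]$ Darboux $2\times 2$ blocks (plus a final zero row/column when $\dim V_{i}$ is odd), so it has rank $2[\dim V_{i}/2]$. A direct coordinate computation shows that $V_{i}^{\bot_{\xi}}$ is spanned by the $e_{j}$ with $j>\dim V_{i}$ when $\dim V_{i}$ is even, and by $e_{\dim V_{i}}$ together with the $e_{j}$ for $j>\dim V_{i}+1$ when $\dim V_{i}$ is odd. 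In both cases every vector involved has index strictly greater than $\dim V_{i-1}$, hence $V_{i}^{\bot_{\xi}}\cap V_{i-1}=\{0\}$.

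For the converse in (i), I would build the basis by induction on $i$. Suppose an adapted basis $(e_{1},\ldots,e_{d_{i-1}})$ of $V_{i-1}$ (with $d_{j}=\dim V_{j}$) has been constructed putting $\xi|_{V_{i-1}}$ in the prescribed form. If $d_{i-1}$ is even, then $\xi|_{V_{i-1}}$ is non-degenerate, so $V_{i}=V_{i-1}\oplus W$ with $W=V_{i-1}^{\bot_{\xi}}\cap V_{i}$; the restriction $\xi|_{W}$ has maximal rank, and one appends a Darboux basis of $W$. If $d_{i-1}$ is odd, then $e_{d_{i-1}}$ spans the radical of $\xi|_{V_{i-1}}$, and the genericity condition $V_{i}^{\bot_{\xi}}\cap V_{i-1}=\{0\}$ forces $e_{d_{i-1}}\notin V_{i}^{\bot_{\xi}}$; hence there exists $v\in V_{i}$ with $\xi(e_{d_{i-1}},v)=1$, and a symplectic Gram--Schmidt step subtracting multiples of $e_{2},e_{4},\ldots,e_{d_{i-1}-1}$ adjusts $v$ to be $\xi$-orthogonal to $e_{1},\ldots,e_{d_{i-1}-1}$. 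This yields $e_{d_{i-1}+1}$ which completes the Darboux pair, after which we are reduced to the previous (even) case applied to the updated subspace of dimension $d_{i-1}+1$.

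For part (ii), observe that $\mathbf{Q}_{\mathcal{V}}$ acts transitively on the set of bases of $V$ adapted to $\mathcal{V}$: the linear map sending one such basis to another manifestly preserves each $V_{i}$, hence belongs to $\mathbf{Q}_{\mathcal{V}}$. Combined with (i), this shows that any two generic forms $\xi,\xi'$ lie in the same $\mathbf{Q}_{\mathcal{V}}$-orbit: choose adapted bases $(e_{i}),(e_{i}')$ normalizing $\xi$ and $\xi'$ respectively, and let $g\in\mathbf{Q}_{\mathcal{V}}$ send $e_{i}$ to $e_{i}'$. Openness follows because the defining conditions of genericity are polynomial non-vanishing conditions on $\bigwedge^{2}V^{\ast}$, namely the Pfaffians of the restrictions $\xi|_{V_{i}}$ of even dimension together with appropriate minors expressing the transversality $V_{i}^{\bot_{\xi}}\cap V_{i-1}=\{0\}$.

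The main obstacle is the parity-change step in the induction for the converse of (i): the genericity hypothesis is used precisely there to ensure that the radical of $\xi|_{V_{i-1}}$, when present, can be paired with a vector lying in $V_{i}$. Without this assumption the radical of $\xi|_{V_{i-1}}$ could stay entirely inside $V_{i}^{\bot_{\xi}}$ and the symplectic normal form would fail to propagate up the flag.
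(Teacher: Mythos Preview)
The paper does not prove this lemma at all: it is quoted verbatim from \cite[Lemme~4.2.1]{DKT} and used as a black box. Your argument is correct and self-contained; there is nothing in the present paper to compare it against.

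A couple of minor remarks on your write-up. In the even-$d_{i-1}$ step of the induction you appeal to a Darboux basis of $W=V_{i-1}^{\bot_\xi}\cap V_i$; it is worth making explicit why $\xi|_W$ has maximal rank, namely because $V_i=V_{i-1}\perp_\xi W$ gives $\operatorname{rank}(\xi|_{V_i})=d_{i-1}+\operatorname{rank}(\xi|_W)$, so $\operatorname{rank}(\xi|_W)=2[d_i/2]-d_{i-1}=2[\dim W/2]$. In the odd-$d_{i-1}$ step, after adjoining $e_{d_{i-1}+1}$ you reduce to the even case applied to the pair $V'\subset V_i$ with $\dim V'=d_{i-1}+1$; since $V'$ is not one of the $V_j$, strictly speaking the induction hypothesis is about the flag, but your argument only uses that $\xi|_{V'}$ is non-degenerate, which you have arranged. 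For (ii), the openness is perhaps most cleanly phrased as the injectivity, for each $i$, of the map $V_{i-1}\to V_i^\ast$, $w\mapsto\xi(w,\cdot)|_{V_i}$, which is the non-vanishing of a maximal minor and hence Zariski-open.
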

\begin{defi}\label{1.2}
Soit $\mathcal{V}=\{ \{0\}=V_{0} \varsubsetneq V_{1}\varsubsetneq V_{2}\varsubsetneq...\varsubsetneq V_{t-1} \varsubsetneq V_{t}=V \}$ un drapeau. On note $h=h(\mathcal{V})$ le nombre d'indices $i$ tels que $1 \leq i \leq t-1$ et $\dim V_{i}$ et $\dim V_{i+1}$ soient tous deux impairs. On dit que le drapeau $\mathcal{V}$ vérifie la propriété $\mathcal{P}$, si deux espaces consécutifs de la suite $\mathcal{V}$ ne peuvent \^{e}tre tous deux de dimension impaire, c'est \`{a} dire $h(\mathcal{V})=0$.
\end{defi}
\begin{defi}\label{}
Soit $\mathcal{V}=\{ \{0\}=V_{0} \varsubsetneq V_{1}  \varsubsetneq V_{2}\varsubsetneq...\varsubsetneq V_{t-1} \varsubsetneq V_{t}=V\}$ un drapeau de $V$. On dit que le drapeau $\mathcal{V}$ vérifie la condition $(\ast)$, si, pour $1\leq i \leq t-1$, entre deux sous-espaces consécutifs $V_{i}$ et $V_{i+1}$ le saut de dimension est 2 s'ils sont tous deux de dimension impaire et 1 sinon.
\end{defi}

\subsection{}\label{}
Soit  $V$ un espace vectoriel de dimension finie sur $\mathbb{K}$  muni d'une forme bilinéaire alternée $\xi$ de rang
maximal. On désigne par $\mathfrak{g}\mathfrak{l}(V) (\xi)$ l'annulateur de $\xi$ dans $\mathfrak{g}\mathfrak{l}(V)$. Lorsque $\xi$ est symplectique, $\mathfrak{g}\mathfrak{l}(V) (\xi)$ est noté $\mathfrak{s}\mathfrak{p}( V, \xi)$ ou simplement $\mathfrak{s}\mathfrak{p}( V )$ et c'est l'algèbre de Lie
du groupe symplectique correspondant. On se donne  un drapeau $\mathcal{V}$ de $V$ générique relativement à $\xi$  et on désigne par $\mathfrak{r}_{\mathcal{V}}$
la sous-algèbre de Lie de $\mathfrak{g}\mathfrak{l}(V) (\xi)$ constituée des endomorphismes stabilisant $\mathcal{V}$ qui est aussi la sous-algèbre de Lie $\mathfrak{q}_{\mathcal{V}}(\xi)$  de  $\mathfrak{q}_{\mathcal{V}}$ stabilisant $\xi$ .

\begin{pr}\label{r de v}\emph{(Voir\cite[Proposition 5.3.1]{DKT})} Soit  $V$  un espace vectoriel de dimension finie  sur $\mathbb{K}$  muni d'une forme bilinéaire alternée $\xi$ de rang
maximal  et $\mathcal{V}=\{ \{0\}=V_{0} \varsubsetneq V_{1}  \varsubsetneq V_{2}\varsubsetneq...\varsubsetneq V_{t-1} \varsubsetneq V_{t}=V\}$ un drapeau générique relativement \`{a} $\xi$.\\
$\text{a})$ Supposons que $\dim V$ est pair, de sorte que $\xi$ est symplectique. Alors,

$\text{(i)}$ on a $\ind(\mathfrak{r}_{\mathcal{V}}) =\sum_{i=1}^{t} [\frac{1}{2} (\dim V_{i}-\dim V_{i-1})]$,

$\text{(ii)}$ la dimension du radical unipotent d'un stabilisateur générique

de la représentation coadjointe de $\mathfrak{r}_{\mathcal{V}}$ est $h(\mathcal{V})$,

$\text{(iii)}$ l'algèbre de Lie $\mathfrak{r}_{\mathcal{V}}$ est quasi-réductive si et seulement si le drapeau $\mathcal{V}$

 vérifie la propriété $\mathcal{P}$.\\
$\text{b})$ Supposons que $\dim V$ est impair et  désignons par $\mathcal{V}^{\prime}$  le drapeau du sous-espace $V_{t-1}$ obtenu en supprimant l'espace $V$ du drapeau $\mathcal{V}$. Alors,

$\text{(iv)}$ on a $$\ind(\mathfrak{r}_{\mathcal{V}}) =\left\{
                                                \begin{array}{ll}
                                                  \sum_{i=1}^{t} [\frac{1}{2} (\dim V_{i}-\dim V_{i-1})]-1 & \hbox{ si $\dim V_{t-1}< \dim V-1$,}\\
                                                  \sum_{i=1}^{t} [\frac{1}{2} (\dim V_{i}-\dim V_{i-1})]+1 & \hbox{ si $\dim V_{t-1}= \dim V-1$,}
                                                \end{array}
                                              \right.$$

$\text{(v)}$ La dimension du radical unipotent d'un stabilisateur générique de la

 représentation coadjointe de $\mathfrak{r}_{\mathcal{V}}$ est $h(\mathcal{V}^{\prime})$.

$\text{(vi)}$ L'algèbre de lie $\mathfrak{r}_{\mathcal{V}}$ est quasi-réductive si et seulement si le drapeau $\mathcal{V}^{\prime}$

vérifie la propriété $\mathcal{P}$.
\end{pr}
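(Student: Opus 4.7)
Je démontrerais cette proposition par récurrence sur la longueur $t$ du drapeau $\mathcal{V}$. Le cas initial $t=1$ se ramène à $\mathfrak{r}_{\mathcal{V}}=\mathfrak{gl}(V)(\xi)$: en dimension paire, cette algèbre coïncide avec $\mathfrak{sp}(V,\xi)$, qui est réductive, d'indice $\frac{1}{2}\dim V$, et l'on a bien $h(\mathcal{V})=0$; en dimension impaire, elle se décrit comme produit semi-direct de $\mathfrak{sp}(V_{t-1})$ par une algèbre de Heisenberg, dont on vérifie directement les valeurs de l'indice et de la dimension du radical unipotent d'un stabilisateur générique, le drapeau réduit $\mathcal{V}'=\{\{0\}\subsetneq V_{t-1}\}$ satisfaisant trivialement la propriété $\mathcal{P}$.

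Pour le pas de récurrence, je considérerais le drapeau tronqué $\widetilde{\mathcal{V}}=\{V_0\subsetneq \ldots \subsetneq V_{t-1}\}$ du sous-espace $V_{t-1}$, muni de la forme $\widetilde{\xi}=\xi|_{V_{t-1}\times V_{t-1}}$, qui reste générique relativement à $\widetilde{\xi}$ par construction. En choisissant une base symplectique adaptée au drapeau, j'exhiberais une décomposition de Levi explicite de $\mathfrak{r}_{\mathcal{V}}$ dont le radical unipotent $\mathfrak{n}={}^{u}\mathfrak{r}_{\mathcal{V}}$ est constitué des endomorphismes $X$ vérifiant $X(V_i)\subset V_{i-1}$ pour tout $i$. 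J'appliquerais alors le lemme \ref{duf82} à un idéal $\mathfrak{n}_0\subset\mathfrak{n}$ bien choisi, correspondant à l'action sur le dernier cran $V/V_{t-1}$, et à une forme fortement régulière $g\in\mathfrak{r}_{\mathcal{V}}^{\ast}$. La formule $\mathfrak{h}(h)=\mathfrak{r}_{\mathcal{V}}(g)+\mathfrak{n}_0(n)$ ramène le calcul de $\ind(\mathfrak{r}_{\mathcal{V}})$ à celui de $\ind(\mathfrak{r}_{\widetilde{\mathcal{V}}})$, la contribution supplémentaire étant $[\frac{1}{2}(\dim V-\dim V_{t-1})]$ dans le cas pair, avec la correction $\pm 1$ du cas impair, selon que $\dim V_{t-1}<\dim V-1$ ou $\dim V_{t-1}=\dim V-1$.

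Pour la caractérisation du radical unipotent d'un stabilisateur générique (assertions (ii) et (v)) et de la quasi-réductivité (assertions (iii) et (vi)), j'interpréterais géométriquement la contribution de chaque paire $(V_i,V_{i+1})$: lorsque $\dim V_i$ et $\dim V_{i+1}$ sont tous deux impairs, le maximum de rang imposé à $\xi|_{V_{i+1}}$ force l'existence d'un vecteur isotrope résiduel dans $V_{i+1}\setminus V_i$, engendrant par récurrence un élément non nul du radical unipotent de $\mathfrak{r}_{\mathcal{V}}(g)$. En comptabilisant ces contributions, on obtient $\dim{}^{u}(\mathfrak{r}_{\mathcal{V}}(g))=h(\mathcal{V})$ (resp.\ $h(\mathcal{V}')$), et la quasi-réductivité équivaut alors à la nullité de ce nombre, d'après la caractérisation $\ind\mathfrak{g}=\rang\mathfrak{g}+\dim{}^{u}\mathfrak{z}$ rappelée plus haut. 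Le principal obstacle sera le suivi combinatoire précis de ces vecteurs isotropes résiduels à travers la récurrence, ainsi que la vérification soigneuse de la dichotomie du cas impair, qui repose sur l'étude du comportement du complément orthogonal $V_{t-1}^{\bot_{\xi}}$ et sur le fait que sa position relative par rapport à $V_{t-1}$ régit la variation de $\ind(\mathfrak{r}_{\mathcal{V}})$.
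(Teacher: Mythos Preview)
The paper does not give its own proof of this proposition: it is stated with the explicit attribution ``(Voir \cite[Proposition 5.3.1]{DKT})'' and is immediately followed by a remark, with no \texttt{proof} environment. It is quoted as an external input from Duflo--Khalgui--Torasso, so there is no argument in the present paper to compare your proposal against.

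That said, a few comments on the proposal itself. The inductive scheme via the lemma~\ref{duf82} of Duflo is a reasonable strategy and is in the spirit of how \cite{DKT} treats these algebras; indeed, the present paper invokes exactly this kind of reduction (see the proof of Theorem~\ref{stab de r}, where \cite[lemme 26 et paragraphes 5.4, 5.5]{DKT} are cited). However, your base case in odd dimension is not correctly described: when $t=1$ one has $V_{t-1}=V_0=\{0\}$, so speaking of a ``produit semi-direct de $\mathfrak{sp}(V_{t-1})$ par une alg\`ebre de Heisenberg'' is meaningless. A direct computation shows instead that for $\dim V=2p+1$ and $t=1$, the algebra $\mathfrak{gl}(V)(\xi)$ is isomorphic to $(\mathfrak{sp}(2p)\times\mathbb{K})\ltimes (\mathbb{K}^{2p})^{\ast}$, with the $\mathbb{K}$-factor acting by scalars on the abelian radical; this is not a Heisenberg extension. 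You would need to redo this case (and check the index formula there, including the dichotomy on $\dim V_{t-1}$, which for $t=1$ reads $0<\dim V-1$ versus $0=\dim V-1$).

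For the inductive step, the sentence ``la contribution suppl\'ementaire \'etant $[\frac{1}{2}(\dim V-\dim V_{t-1})]$'' hides the real work: you must identify the stabilizer $\mathfrak{h}=\mathfrak{g}(n)$ of the restriction $n=g|_{\mathfrak{n}_0}$ precisely enough to recognize $\mathfrak{r}_{\widetilde{\mathcal{V}}}$ (or a product of it with an explicit abelian or symplectic factor) inside it, and then control both $\dim\mathfrak{n}_0(n)$ and the parity bookkeeping. This is where the actual content of \cite[\S5.4--5.6]{DKT} lies, and your outline does not yet indicate which ideal $\mathfrak{n}_0$ you take nor why $\mathfrak{h}$ has the required form.
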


\begin{rema}\label{2.1}
Soit  $V$  un espace vectoriel de dimension finie sur $\mathbb{K}$  muni d'une forme bilinéaire alternée  $\xi$ de rang
maximal et $\mathcal{V}=\{ \{0\}=V_{0} \varsubsetneq V_{1}  \varsubsetneq V_{2}\varsubsetneq...\varsubsetneq V_{t-1} \varsubsetneq V_{t}=V\}$ un drapeau générique relativement \`{a} $\xi$.\\
Supposons qu'il existe un sous-espace $V_{i}$ du drapeau $\mathcal{V}$ tel que $\dim V_{i}$ soit pair. Alors $\xi$ induit une forme symplectique sur le sous-espace $U=  V_{i}$ de sorte que $V$ est somme directe de $U$ et de son orthogonal $W$ relativement \`{a} $\xi$. On désigne par $\mathcal{U}$ (resp. $\mathcal{W}$)  le drapeau $\mathcal{U}=\{ \{0\}=V_{0} \varsubsetneq V_{1}  \varsubsetneq V_{2}\varsubsetneq...\varsubsetneq V_{i-1} \varsubsetneq V_{i}=U\}$ ( resp. $\mathcal{W}=\{ \{0\}=W_{0} \varsubsetneq W_{1}= V_{i+1}\cap W  \varsubsetneq...\varsubsetneq V_{j}=V_{i+j}\cap W \varsubsetneq... \varsubsetneq W_{t-i}=W\}$ ). Comme $\mathcal{V}$ est générique relativement \`{a} $\xi$, il est clair que $\mathcal{U}$ ( resp. $\mathcal{W}$ ) est générique relativement \`{a} $\xi_{|U}$ ( resp.$\xi_{|W}$ ). Par suite, la sous-algèbre  $\mathfrak{r}_{\mathcal{V}}$ s'identifie au produit direct $\mathfrak{r}_{\mathcal{U}} \times \mathfrak{r}_{\mathcal{W}}$. En particulier,  $\mathfrak{r}_{\mathcal{V}}$ est quasi-réductive (resp.stable) si et seulement si $\mathfrak{r}_{\mathcal{U}}$ et $\mathfrak{r}_{\mathcal{W}}$ le sont. De plus, on a $$\ind\mathfrak{r}_{\mathcal{V}}=\ind\mathfrak{r}_{\mathcal{U}}+\ind\mathfrak{r}_{\mathcal{W}}\,\, \hbox{ et }\,\, \rang\mathfrak{r}_{\mathcal{V}}=\rang\mathfrak{r}_{\mathcal{U}}+\rang\mathfrak{r}_{\mathcal{W}}.$$
D'autre part, $\mathcal{V}$ vérifie la condition $(\ast)$ si et seulement s'il en est de même de $\mathcal{U}$ et de $\mathcal{W}$.
\end{rema}
\begin{pr}\label{quasi}
Soit  $V$  un espace vectoriel de dimension finie sur $\mathbb{K}$  muni d'une forme bilinéaire alternée $\xi$ de rang
maximal et $\mathcal{V}=\{ \{0\}=V_{0} \varsubsetneq V_{1}  \varsubsetneq V_{2}\varsubsetneq...\varsubsetneq V_{t-1} \varsubsetneq V_{t}=V\}$ un drapeau de $V$ générique relativement \`{a} $\xi$.\\
 Alors les assertions suivantes sont équivalentes:\\
$\text{(i)}$ $\mathfrak{r}_{\mathcal{V}}$  est de $\rang$ nul.\\
$\text{(ii)}$ ${\mathcal{V}}$ vérifie la condition $(\ast)$ et si $\dim V$ est impair, il en est de m\^{e}me de $\dim V_{t-1}$.
\end{pr}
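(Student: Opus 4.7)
The plan is to establish the equivalence by reducing, via Remark~\ref{2.1}, to the case where every intermediate subspace $V_i$ (for $1\leq i\leq t-1$) has odd dimension, and then computing the rank directly from Proposition~\ref{r de v}.

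For the reduction, I observe that if some $V_i$ with $1\leq i\leq t-1$ has even dimension, Remark~\ref{2.1} decomposes $V=U\oplus W$ with $U=V_i$ and $W=U^{\bot_{\xi}}$, giving $\mathfrak{r}_{\mathcal{V}}\cong\mathfrak{r}_{\mathcal{U}}\times\mathfrak{r}_{\mathcal{W}}$ and additivity of the rank. Condition $(\ast)$ transfers by the last statement of Remark~\ref{2.1}, and the tail condition concerns only the $\mathcal{W}$-factor because $\dim U$ is even, so the parities of $\dim V_{t-1}$ and of the corresponding tail dimension of $\mathcal{W}$ coincide. Iterating this reduction, it suffices to prove the equivalence under the hypothesis that $\dim V_i$ is odd for all $1\leq i\leq t-1$.

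Now I carry out the base case. Setting $d_i=\dim V_i-\dim V_{i-1}$, the parity hypothesis forces $d_i$ even for $2\leq i\leq t-1$, $d_1$ odd, and $d_t\equiv\dim V\pmod 2$. In the even case Proposition~\ref{r de v}(a) gives
$$\rang\mathfrak{r}_{\mathcal{V}}=\sum_{i=1}^{t}\left[\tfrac{d_i}{2}\right]-h(\mathcal{V})=\frac{\dim V-2}{2}-(t-2),$$
which vanishes iff $\dim V=2(t-1)$; combined with $(\ast)$ (prescribing $d_i=2$ between two odd spaces and $d_i=1$ otherwise), this forces the dimension sequence $0,1,3,\ldots,2t-3,2t-2$, giving the equivalence (the tail condition being vacuous). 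In the odd case the hypothesis $\dim V_{t-1}$ odd, together with $\dim V-1$ being even, forces $\dim V_{t-1}<\dim V-1$, so the first subcase of Proposition~\ref{r de v}(b)(iv) applies; the analogous computation yields $\rang\mathfrak{r}_{\mathcal{V}}=(\dim V-2t+1)/2$, vanishing iff $\dim V=2t-1$, and the corresponding sequence $0,1,3,\ldots,2t-3,2t-1$ is exactly what $(\ast)$ together with the tail condition imposes.

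The main technical point is the arithmetic in the base case, where one must track the parities of the $d_i$ and the exact value of $h(\mathcal{V})$ (respectively $h(\mathcal{V}')$), together with the sub-case dichotomy of Proposition~\ref{r de v}(b). The more delicate point is verifying that the tail condition descends correctly through the reduction step, which hinges on $\dim U$ being even in Remark~\ref{2.1} so that parities at the tail end of the drapeau are preserved.
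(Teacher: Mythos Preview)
Your proof is correct and follows essentially the same route as the paper's: reduce via Remark~\ref{2.1} to the case where every intermediate $V_i$ has odd dimension, then read off the rank from Proposition~\ref{r de v} and show it vanishes exactly when condition~$(\ast)$ (together with the tail parity condition) holds. The only cosmetic difference is that the paper parametrizes the jumps as $a_1=2q_1+1$, $a_i=2(q_i+1)$, $a_t=2q_t+1$ (resp.\ $a_t=2(q_t+1)$) and obtains $\rang\mathfrak{r}_{\mathcal{V}}=\sum_i q_i$, whereas you compute $\sum_i[d_i/2]-h(\mathcal{V})$ (resp.\ $-h(\mathcal{V}')$) directly and simplify to a closed formula in $\dim V$ and $t$; these are the same calculation.
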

\begin{proof}
D'après la remarque  précédente, on se ramène à démontrer la proposition dans l'un des cas suivants:
\begin{eqnarray}
\begin{split}
&- \dim V  \hbox{ est pair } , t\geq 2 \hbox{  et pour } 1\leq i \leq t-1, \dim V_{i} \hbox{ est un nombre impair },\\
&-  \dim V_{i} \hbox{ impair pour }  1\leq i \leq t.\\
\end{split}
\end{eqnarray}
On pose $\dim V_{i}=2p_{i}+1$, $1\leq i \leq t-1$ et  $a_{i}=\dim V_{i}-\dim V_{i-1}$, $1\leq i \leq t$. Alors $a_{1}=2p_{1}+1=2q_{1}+1$ est impair et
$a_{i}=2(p_{i}-p_{i-1})=2(q_{i}+1)$, $2\leq i \leq t-1$ est pair.\\
On se place dans le premier cas et on pose $\dim V_{t}=2p_{t}$. Alors $a_{t}=2(p_{t}-p_{t-1})-1=2q_{t}+1$ est impair. D'après la proposition \ref{r de v},  $$\rang \mathfrak{r}_{\mathcal{V}}=\sum_{i=1}^{t}q_{i}.$$
Ainsi, $\mathfrak{r}_{\mathcal{V}}$ est de \rang nul si et seulement si $q_{i}=0$ pour tout $1\leq i\leq t$. D'o\`{u} la proposition dans ce cas.\\
On se place dans la deuxième cas. On pose $\dim V=2p_{t}+1$. Alors $a_{t}=2(p_{t}-p_{t-1})=2(q_{t}+1)$ est pair. D'après la proposition \ref{r de v}, on a $$\rang \mathfrak{r}_{\mathcal{V}}=\sum_{i=1}^{t}q_{i}.$$
La proposition est alors claire dans ce cas.
\end{proof}
\begin{theo}\label{stab de r}
Soit  $V$  un espace vectoriel de dimension finie sur $\mathbb{K}$  muni d'une forme bilinéaire alternée $\xi$ de rang
maximal  et $\mathcal{V}=\{ \{0\}=V_{0} \varsubsetneq V_{1}  \varsubsetneq V_{2}\varsubsetneq...\varsubsetneq V_{t-1} \varsubsetneq V_{t}=V\}$ un drapeau générique relativement \`{a} $\xi$.\\
Alors  les assertions suivantes sont équivalentes:\\
$\text{(i)}$ $\mathfrak{r}_{\mathcal{V}}$ admet une forme linéaire stable.\\
$\text{(ii)}$ $\mathfrak{r}_{\mathcal{V}}$ est quasi-réductive.
\end{theo}
\begin{proof}
Seule l'implication $\text{(i)} \Rightarrow \text{(ii)}$ mérite démonstration. D'après la remarque précédente, on se ramène au cas o\`{u} aucun des éléments du drapeau $\mathcal{V}$, \`{a} l'exception de $\{0\}$ et $V$, est de dimension paire. On pose $d=\dim V$ et $\dim V_{i}=2p_{i}+1$, $1\leq i \leq t$, $p_{t}=[\frac{d}2{}]$. On choisit une base $e_{1},...,e_{d}$ de $V$ adaptée au drapeau $\mathcal{V}$ et telle que $\xi=\sum_{1\leq i \leq p_{t}} e^{\ast}_{2i-1} \wedge e^{\ast}_{2i}$ et on pose $W_{i}=\mathbb{K} e_{2p_{i-1}+3}\oplus\mathbb{K} e_{2p_{i-1}+4}\oplus...\oplus\mathbb{K} e_{2p_{i}}$, $1\leq i \leq t$ (on convient que $p_{0}=-1$). Alors les $W_{i}$, $1\leq i \leq t$, sont des sous-espaces symplectiques deux à deux orthogonaux et il suit de \cite[lemme 26 et paragraphes 5.4, 5.5]{DKT} que les sous-algèbres de Cartan de $\prod_{1\leq i \leq t}\mathfrak{s}\mathfrak{p}(W_{i})$ sont des sous-algèbres de Cartan-Duflo de $\mathfrak{r}_{\mathcal{V}}$.\\
Soit donc $\mathfrak{j}$ une sous-algèbre de Cartan de $\prod_{1\leq i \leq t}\mathfrak{s}\mathfrak{p}(W_{i})$. Le sous-espaces $V^{0}$ de $V$ constitué des vecteurs $\mathfrak{j}$-invariants n'est autre que $(\oplus_{1\leq i \leq t}W_{i})^{\perp}$. On considère $\mathfrak{g}\mathfrak{l}(V^{0})$  comme une  sous-algèbre de $\mathfrak{g}\mathfrak{l}(V)$ en identifiant un élément $X \in \mathfrak{g}\mathfrak{l}(V^{0})$ avec l'endomorphisme linéaire de $V$ agissant comme $X$ dans $V^{0}$ et trivialement dans son supplémentaire $\oplus_{1\leq i \leq t} W_{i}$. Alors, on vérifie que le commutant de $\mathfrak{j}$ dans $\mathfrak{g}\mathfrak{l}(V)$ est $\mathfrak{j}\oplus \mathfrak{g}\mathfrak{l}(V^{0})$. Il est alors clair que $\mathfrak{r}_{\mathcal{V}}^{\mathfrak{j}}=\mathfrak{j}\oplus \mathfrak{r}_{\mathcal{V}^{0}}$, o\`{u} $\mathcal{V}^{0}$ est la trace du drapeau $\mathcal{V}$ sur le sous-espace $V^{0}$.\\
D'autre part, remarquons que $\xi_{0}=\xi|_{V^{0}}$ est de rang maximal et $\mathcal{V}^{0}$ est en position générique par rapport à $\xi_{0}$ (i.e $\mathfrak{r}_{\mathcal{V}^{0}}$ est de même type que $\mathfrak{r}_{\mathcal{V}}$). Compte tenu de la proposition \ref{prop1} et comme $\mathfrak{r}_{\mathcal{V}^{0}}$ est de \rang nul, on se ramène \`{a} traiter le cas de \rang nul pour $\mathfrak{r}_{\mathcal{V}}$.\\
Il s'agit donc de montrer que si $\rang \mathfrak{r}_{\mathcal{V}}$ est nul  et $\ind \mathfrak{r}_{\mathcal{V}}$ est strictement positif, alors $\mathfrak{r}_{\mathcal{V}}$ n'admet pas de forme stable.\\
Compte tenu de ce qui précède, il suffit  d'étudier les deux cas suivants:\\
\\(a) $\dim V=2p$, $p\geq 2$ et $\dim V_{i}=2i-1$ pour tout $1\leq i \leq t-1$ de sorte que  $\mathcal{V}=\{ \{0\}=V_{0} \varsubsetneq V_{1} \varsubsetneq V_{2}\varsubsetneq...\varsubsetneq V_{p}\varsubsetneq V_{p+1}=V \}$. On choisit une base $(e_{1},...,e_{2p})$ de $V$ adaptée au drapeau $\mathcal{V}$ telle que $V_{j}=\mathbb{K}e_{1}\oplus...\oplus \mathbb{K}e_{2j-1},  \,\,\,1\leq j \leq p$ et $\xi=\sum_{j=1}^{p} e^{\ast}_{2j-1} \wedge e^{\ast}_{2j}$. On note $(E_{ij})_{1\leq i,j \leq 2p}$ la base de $\mathfrak{g}\mathfrak{l}(V)$ canoniquement associée à la base $(e_{1},...,e_{2p})$ de $V$.\\
Dans ces conditions, il résulte de  \cite[5.4]{DKT} que
\begin{equation}
\mathfrak{g}=\mathfrak{r}_{\mathcal{V}}=\mathfrak{l}_{\mathcal{V}} \oplus \mathfrak{n}_{\mathcal{V}}
\end{equation}
o\`{u}
\begin{equation}
\mathfrak{l}_{\mathcal{V}}= \bigoplus_{j=1}^{p} \mathbb{K} H_{j},
\end{equation}
avec
$$ H_{j}=E_{2j-1,2j-1}-E_{2j,2j},$$
est un facteur réductif de $\mathfrak{r}_{\mathcal{V}}$, commutatif dans ce cas, et
\begin{equation}
\mathfrak{n}_{\mathcal{V}}= \bigoplus_{j=1}^{p} \mathbb{K} Z_{j}\oplus (\bigoplus_{j=1}^{p-1}\mathbb{K} T_{j}),
\end{equation}
avec
 $$ Z_{j}=2 E_{2j-1,2j} \hbox{ et } T_{j}=E_{2j-1,2j+2}+E_{2j+1,2j},$$
est le radical unipotent de $\mathfrak{r}_{\mathcal{V}}$, également commutatif dans ce cas.\\
On a alors:
\begin{eqnarray}
\begin{split}
[H_{j}, Z_{l}]&=2 \delta_{l,j} Z_{j}\\
[H_{j}, T_{l}]&= \delta_{l,j} T_{j}+ \delta_{l,j-1} T_{j-1}=(\delta_{j,l}+ \delta_{j,l+1}) T_{l}.\\
\end{split}
\end{eqnarray}
Soit $g \in \mathfrak{g}^{\ast}$. On pose $n =g|_{n_{\mathcal{V}}}$, $g(Z_{j})=\zeta_{j}$ et $g(T_{j})= \tau_{j}$.\\
Supposons que  $\prod_{j=1}^{p} \zeta_{j}\neq 0$. Alors, on a $$\mathfrak{g}(n)=\mathfrak{n}_{\mathcal{V}}\,\,et\,\,\mathfrak{g}(g)=\mathfrak{n}_{\mathcal{V}}(g).$$
Par suite et compte tenu du lemme \ref{duf82}, on a $\mathbf{N}_{\mathcal{V}}.g=g + \mathfrak{n}_{\mathcal{V}}^{\perp}$, o\`{u} $\mathbf{N}_{\mathcal{V}}$ désigne le radical unipotent du sous-groupe algébrique connexe $\mathbf{R}_{\mathcal{V}}$ de $\mathbf{G}\mathbf{L}(V)$ d'algèbre de Lie $\mathfrak{r}_{\mathcal{V}}$. Ainsi, on peut supposer que $g|_{\mathfrak{l}_{\mathcal{V}}}=0$.\\
Soit  $$X=\sum_{i=1}^{p} z_{i}\,\,Z_{i}+ \sum_{i=1}^{p-1} t_{i}\,\,T_{i} \in \mathfrak{n}_{\mathcal{V}}.$$
On a
\begin{eqnarray}
\begin{split}
[H_{1}, X]  &=2 z_{1} Z_{1}+ t_{1} T_{1}\\
[H_{i}, X] &=2 z_{i} Z_{i}+ t_{i} T_{i}+  t_{i-1} T_{i-1},\,\,2\leq i \leq p-1,\\
[H_{p}, X] &=2 z_{p} Z_{p}+ t_{p-1} T_{p-1}
\end{split}
\end{eqnarray}
Alors, on a $X \in \mathfrak{n}_{\mathcal{V}}(g)$ si et seulement si:
$$\left\{
                           \begin{array}{ll}
                             2\zeta_{1}z_{1}+\tau_{1} t_{1}=0 & \hbox{ } \\
                              2\zeta_{i}z_{i}+\tau_{i} t_{i}+ \tau_{i-1} t_{i-1}=0,  & \hbox{$2\leq i \leq p-1$} \\
                              2\zeta_{p}z_{p}+\tau_{p-1} t_{p-1}=0. & \hbox{ }
                           \end{array}
                         \right.$$
On en déduit que
$$\mathfrak{g}(g)=\bigoplus_{i=1}^{p-1}\mathbb{K} W_{i},$$
avec $$W_{i}=T_{i}-\frac{\tau_{i}}{2}(\frac{1}{\zeta_{i}} Z_{i} + \frac{1}{\zeta_{i+1}} Z_{i+1})=E_{2i-1,2i+2}+E_{2i+1,2i}-\frac{\tau_{i}}{2}(\frac{1}{\zeta_{i}} E_{2i-1,2i}+ \frac{1}{\zeta_{i+1}} E_{2i+1,2i+2}).$$
Soit $H=\sum_{i=1}^{p} H_{i}$. Alors, on a
$$[H, Z_{l}]= 2 Z_{l} \,\,\,\, et\,\,\,\,[H, T_{l}]= 2 T_{l}$$
et donc  $$[H, W_{l}]=2 W_{l},\,\,\, 1\leq l \leq p-1.$$
Il en résulte que
$$[\mathfrak{g},\mathfrak{g}(g)]\cap \mathfrak{g}(g) \neq \{0\}.$$
Compte tenu du lemme \ref{Tauvel Yu} et du fait que $g$ est générique, il résulte  que $\mathfrak{g}$ n'admet pas de forme linéaire stable.\\
\\(b) $\dim V=2p+1$, $p\geq 2$ et $\dim V_{i}=2i-1$ pour tout  $1\leq i \leq t-1$ de sorte que $\mathcal{V}=\{ \{0\}=V_{0} \varsubsetneq V_{1} \varsubsetneq V_{2}\varsubsetneq...\varsubsetneq V_{p} \varsubsetneq V_{p+1}=V \}$. On choisit une base $(e_{1},...,e_{2p+1})$ de $V$ adaptée au drapeau $\mathcal{V}$ telle que $V_{j}=\mathbb{K}e_{1}\oplus...\oplus \mathbb{K}e_{2j-1},  \,\,\,1\leq j \leq p+1$ et $\xi=\sum_{j=1}^{p} e^{\ast}_{2j-1} \wedge e^{\ast}_{2j}$.\\
\\Dans ces conditions et compte tenu de \cite[5.6]{DKT}, les matrices des éléments  de $\mathfrak{r}_{\mathcal{V}}$ dans la base $e_{1},...,e_{2p+1}$ sont de la forme:\\

$\left(
\begin{array}{ccccccccccccc}
    a_{1} & z_{1} & 0 & t_{1} & 0 & 0 &  &  &  &  &  &  &  \\
    0 & -a_{1} & 0 & 0 & 0 & 0 &  &  & &  & & &  \\
    0 & t_{1} & a_{2} & z_{2} & 0 & t_{2}& & &  & &  &  &  \\
    0 & 0 & 0 & - a_{2} & 0 & 0 &  & &  & & &  & \\
    0 & 0 & 0 & t_{2} & a_{3} & z_{3} &  &  &  &  &  &  &  \\
    0 & 0 & 0 & 0 & 0 & - a_{3} &  &  &  &  &  &  &  \\
     &  &  &  &  &  & . &  &  &  &  &  &  \\
     &  &  &  &  &  &  & . &  &  &  &  &  \\
     &  &  &  &  &  &  &  & a_{p-1} & z_{p-1} & 0 & t_{p-1} & 0 \\
     &  &  &  &  &  &  &  & 0 & - a_{p-1} & 0 & 0 & 0 \\
     &  &  &  &  &  &  &  & 0 & t_{p-1} & a_{p} & z_{p} & 0 \\
     &  &  &  &  &  &  &  & 0 & 0 & 0 & - a_{p} & 0 \\
     &  &  &  &  & &  &  & 0 & 0 & 0 & t_{p} & a_{p+1} \\
\end{array}
\right)$
\\
\\On voit alors que si l'on pose $V^{\prime}=\mathbb{K}e_{1}\oplus...\oplus \mathbb{K}e_{2p}$,  et $\mathcal{V}^{\prime}=\{ \{0\}=V^{\prime}_{0} \varsubsetneq V^{\prime}_{1}=V_{1} \varsubsetneq V^{\prime}_{2}=V_{2}\varsubsetneq  V^{\prime}_{p}=V_{p} \varsubsetneq V^{\prime}_{p+1}=V^{\prime} \}$,  $\xi|_{V^{\prime}}$ est symplectique et $\mathcal{V}^{\prime}$ est en position générique par rapport \`{a} $\xi|_{V^{\prime}}$ tandis que
\begin{equation}
\mathfrak{r}_{\mathcal{V}}=\mathfrak{l}_{\mathcal{V}}\oplus \mathfrak{n}_{\mathcal{V}}
\end{equation}
o\`{u} $\mathfrak{l}_{\mathcal{V}}=\mathfrak{l}_{\mathcal{V}^{\prime}}\oplus \mathbb{K} H_{p+1}$ est un facteur réductif de $\mathfrak{r}_{\mathcal{V}}$ qui est un tore dans ce cas, et $\mathfrak{n}_{\mathcal{V}}=\mathfrak{n}_{\mathcal{V}^{\prime}}\oplus\mathbb{K} T_{p}$ son radical unipotent, avec
\begin{equation}
H_{p+1}=E_{2p+1,2p+1}\hbox{ et } T_{p}=E_{2p+1,2p}.
\end{equation}
Dans ce cas, on a les crochets supplémentaires
$$[H_{p+1}, Z_{j}]=0 \,\,\, 1\leq j  \leq p \,\,\, , [H_{p+1}, T_{j}]=0 \,\,\, 1\leq j  \leq p-1$$
$$[H_{j}, T_{p}]= \delta_{jp} T_{p} \,\,\, 1\leq j  \leq p \,\,\, , [H_{p+1}, T_{p}]=T_{p}. $$
Soit $g$ une forme fortement régulière sur $\mathfrak{g}=\mathfrak{r}_{\mathcal{V}}$. On pose $n=g|_{\mathfrak{n}_{\mathcal{V}}}$, $\zeta_{j}=g(Z_{j})$ et $\tau_{j}=g(T_{j})$.\\
Supposons que $\tau_{p}\prod_{j=1}^{p} \zeta_{j}\neq 0$. Alors, on a $\mathfrak{g}(n)=\mathfrak{n}_{\mathcal{V}}$ de sorte que, compte tenu du lemme \ref{duf82}, $\mathbf{N}_{\mathcal{V}}.g=g+ \mathfrak{n}_{\mathcal{V}}^{\perp}$, o\`{u} $\mathbf{N}_{\mathcal{V}}$ désigne le radical unipotent du sous-groupe algébrique connexe $\mathbf{R}_{\mathcal{V}}$ de $\mathbf{G}\mathbf{L}(V)$ d'algèbre de Lie $\mathfrak{r}_{\mathcal{V}}$, et qu'on peut supposer que $g|_{\mathfrak{l}_{\mathcal{V}}}=0$.\\
D'autre part, si  $X=\sum_{l=1}^{p}z_{l} Z_{l}+ \sum_{l=1}^{p}t_{l} T_{l}$,  on a
\begin{eqnarray}
\begin{split}
[H_{1}, X]  &=2 z_{1} Z_{1}+  t_{1} T_{1}\\
[H_{i}, X] &=2 z_{i} Z_{i}+  t_{i-1} T_{i-1}+ t_{i} T_{i},\,\,2\leq i \leq p,\\
[H_{p+1}, X]&= t_{p} T_{p}
\end{split}
\end{eqnarray}
Il s'ensuit que $X \in \mathfrak{g}(g)=\mathfrak{n}_{\mathcal{V}}(g)$ si et seulement si
$$\left\{
                           \begin{array}{ll}
                             2\zeta_{1}z_{1}+\tau_{1} t_{1}=0, & \hbox{} \\
                              2\zeta_{i}z_{i}+ \tau_{i-1} t_{i-1}+\tau_{i} t_{i}=0, & \hbox{$2\leq i \leq p-1$} \\
                              \tau_{p} t_{p}=0, & \hbox{.}
                           \end{array}
                         \right.$$
On en déduit que $\mathfrak{g}(g)=\bigoplus_{i=1}^{p-1}\mathbb{K} W_{i},$
avec   $$W_{i}=T_{i}-\frac{\tau_{i}}{2}(\frac{1}{\zeta_{i}} Z_{i} + \frac{1}{\zeta_{i+1}} Z_{i+1})=E_{2i-1,2i+2}+E_{2i+1,2i}-\frac{\tau_{i}}{2}(\frac{1}{\zeta_{i}} E_{2i-1,2i}+ \frac{1}{\zeta_{i+1}} E_{2i+1,2i+2}).$$
Si $H=\sum_{i=1}^{p+1} H_{i}$,  on a $$[H, W_{i}]=2 W_{i} \hbox{ pour } 1\leq i \leq p-1. $$
Par suite, on a $$[\mathfrak{g},\mathfrak{g}(g)]\cap \mathfrak{g}(g) \neq \{0\}.$$
Comme $g$ est générique, ceci achève la démonstration dans ce cas.
\end{proof}
\section{Cas des sous-algèbres paraboliques de $\mathfrak{s}\mathfrak{o}(E)$ }\label{première section}
Dans ce num\'{e}ro, on va \'{e}tudier la stabilit\'{e} des sous-alg\`{e}bres paraboliques des alg\`{e}bres de Lie orthogonales. Exactement, on va montrer l'assertion $(\text{ii})$ de la conjecture de Panyushev \cite[conjecture 5.6] {Panyushev2005} pour le cas des sous-alg\`{e}bres paraboliques d'une alg\`{e}bre de Lie orthogonale.\\

\subsection{Réduction au cas de rang nul pour les sous-algèbres paraboliques d'une algèbre de Lie orthogonale}\label{red.parab}
 le but de ce paragraphe est de ramener l'étude de la stabilité au cas de rang nul pour les sous-algèbres paraboliques et qui nous sera utile pour la suite:\\
Soit $E$ un espace vectoriel de dimension finie sur $\mathbb{K}$ muni d'une forme bilinéaire symétrique non dégénérée $B$. On désigne par $\mathfrak{s}\mathfrak{o}(E)$ l'algèbre de Lie du groupe orthogonal correspondant. Soit $\mathcal{V}=\{ \{0\}=V_{0} \varsubsetneq V_{1} \varsubsetneq V_{2}\varsubsetneq...\varsubsetneq V_{t-1} \varsubsetneq V_{t}=V \}$ un drapeau de sous-espaces isotropes de $E$ avec $\dim V\geq 1$. On désigne par $\mathfrak{p}_{\mathcal{V}}$ la sous-algèbre de Lie de $\mathfrak{s}\mathfrak{o}(E)$ constituée des endomorphismes de $E$ stabilisant le drapeau $\mathcal{V}$ : c'est une sous-algèbre parabolique de $\mathfrak{s}\mathfrak{o}(E)$ et les sous-algèbres paraboliques de $\mathfrak{s}\mathfrak{o}(E)$ sont toutes obtenues ainsi.\\
 Soit maintenant $\mathfrak{g}=\mathfrak{p}_{\mathcal{V}}$. Supposons que $\mathfrak{g}$ ne soit pas quasi-réductive et donc que $\rang \mathfrak{g} < \ind \mathfrak{g}$. On cherche \`{a} savoir si $\mathfrak{g}$ admet ou non des formes stables.\\
Soit $\mathfrak{j}$ une sous-algèbre de Cartan-Duflo de $\mathfrak{g}$ et $\Pi$ l'ensemble des poids non nuls de $\mathfrak{j}$ dans $E$. Si $\alpha \in \Pi \cup \{0\}$, on note $E^{\alpha}$ le sous-espace poids correspondant. Si $\alpha \in \Pi$, $-\alpha \in \Pi$, de plus $E^{\alpha}$ et $E^{-\alpha}$ sont des sous-espaces totalement isotropes mis en dualité par $B$. Si $\alpha,\beta \in \Pi$ sont tels que $\alpha+\beta \neq 0$, alors $E^{\alpha}$ et $E^{\beta}$ sont orthogonaux.\\
Soit $\Pi^{+}\subset \Pi$ une partie telle que $\Pi=\Pi^{+}\amalg \Pi^{-}$ o\`{u} $\Pi^{-}=-\Pi^{+}$.\\
Alors, on a
$$\mathfrak{s}\mathfrak{o}(E)^{\mathfrak{j}}=\mathfrak{s}\mathfrak{o}(E^{0})\times \prod_{\alpha \in \Pi^{+}}\mathfrak{g}\mathfrak{l}(E^{\alpha}).$$
Pour $\alpha \in \Pi \cup \{0\}$, soit $\mathcal{V}^{\alpha}$ le drapeau de $E^{\alpha}$ dont les éléments sont ceux
de $U\cap E^{\alpha}, U \in \mathcal{V}$ rangés dans l'ordre strictement croissant, auxquels on adjoint $E^{\alpha}$ si $\alpha \neq 0$. Alors, on a $$\mathfrak{g}^{\mathfrak{j}}=\mathfrak{p}_{\mathcal{V}^{0}}\times \prod_{\alpha \in \Pi^{+}}\mathfrak{q}_{\mathcal{V}^{\alpha}}$$
o\`{u} avec les notations du numéro précédent $\mathfrak{q}_{\mathcal{V}^{\alpha}}$ désigne la sous-algèbre parabolique de $\mathfrak{g}\mathfrak{l}(E^{\alpha})$ qui stabilise le drapeau $\mathcal{V}^{\alpha}$. D'autre part, d'après Panyushev (voir\cite{Panyushev2005}) les sous-algèbres paraboliques $\mathfrak{q}_{\mathcal{V}^{\alpha}}$ sont stables. Il s'ensuit que $\mathfrak{g}^{\mathfrak{j}}$ admet des formes stables si et seulement si $\mathfrak{p}_{\mathcal{V}^{0}}$ admet des formes stables. Or d'apr\`{e}s \ref{cons} et comme son centre est trivial, $\mathfrak{p}_{\mathcal{V}^{0}}$ est de $\mathrm{rang}$ nul. Ainsi et compte tenu de la proposition \ref{prop1}, pour démontrer l'équivalence entre la quasi-réductivité et la stabilité dans le cas considéré, on se ram\`{e}ne \`{a} montrer que  les sous-alg\`{e}bres paraboliques de $\mathfrak{s}\mathfrak{o}(E)$ qui sont de $\mathrm{rang}$ nul et d'indice strictement positif ne sont pas stables.

\begin{theo}\label{D.K.T}\emph{(Voir \cite[5.15]{DKT})}
Soit $E$ un espace vectoriel de dimension $q\geq 3$ sur $\mathbb{K}$ muni d'une forme bilinéaire symétrique non dégénérée $B$, $\mathcal{V}=\{ \{0\}=V_{0} \varsubsetneq V_{1} \varsubsetneq V_{2}\varsubsetneq...\varsubsetneq V_{t-1} \varsubsetneq V_{t}=V\}$ un drapeau de sous-espaces isotropes de $E$ avec $r=\dim V\geq 1$. On note $\mathcal{V}^{\prime}$ le drapeau tel que  $\mathcal{V}^{\prime}=\mathcal{V}\setminus\{V\}$, si $r$ est impair égal \`{a} $\frac{q}{2}$, et $\mathcal{V}^{\prime}=\mathcal{V}$, sinon.\\
$\text{(i)}$ On a les formules suivantes pour l'indice de la sous-algèbre parabolique $\mathfrak{p}_{\mathcal{V}}$:
$$\ind(\mathfrak{p}_{\mathcal{V}})=\left\{
  \begin{array}{llll}

    [\frac{q}{2}]-r + \sum_{i=1}^{t}[\frac{1}{2}(\dim V_{i}-\dim V_{i-1})] & \hbox{ si r est pair,} \\
\\

 [\frac{q-1}{2}]-r + \sum_{i=1}^{t}[\frac{1}{2}(\dim V_{i}-\dim V_{i-1})] & \hbox{ si r est impair, et $r< \frac{q}{2}$,} \\
\\
    \sum_{i=1}^{t}[\frac{1}{2}(\dim V_{i}-\dim V_{i-1})]-1 & \hbox{ si r est impair, $r=\frac{q}{2}$ et }\\

  &\hbox{$\dim V_{t-1}< r-1$,}\\
\\
    \sum_{i=1}^{t}[\frac{1}{2}(\dim V_{i}-\dim V_{i-1})]+1 & \hbox{ si r est impair, $r=\frac{q}{2}$ et }\\
 &\hbox{ $\dim V_{t-1}= r-1$, }
  \end{array}
\right.$$
$\text{(ii)}$ La dimension du radical unipotent d'un stabilisateur générique de la représentation coadjointe de $\mathfrak{p}_{\mathcal{V}}$ est $h(\mathcal{V}^{\prime})$.\\
 $\text{(iii)}$ L'algèbre de Lie $\mathfrak{p}_{\mathcal{V}}$ est quasi-réductive si et seulement si le drapeau $\mathcal{V}^{\prime}$ vérifie la propriété $\mathcal{P}$.
\end{theo}

\begin{pr}\label{}
Soit $\mathcal{V}=\{ \{0\}=V_{0} \varsubsetneq V_{1} \varsubsetneq V_{2}\varsubsetneq...\varsubsetneq V_{t-1}\varsubsetneq V_{t}=V \}$ un drapeau de sous-espaces isotropes de E et $\mathfrak{p}_{\mathcal{V}}$ la sous-algèbre parabolique qui stabilise le drapeau $\mathcal{V}$. On pose $q=\dim E$ et $r=\dim V$. Alors les assertions suivantes sont équivalentes:\\
$\text{(i)}$ $\mathfrak{p}_{\mathcal{V}}$ est de $\rang$ nul.\\
$\text{(ii)}$ la condition $(\ast)$ est satisfaite par $\mathcal{V}$ et $(q , r)$ vérifie l'une des conditions suivantes:
\begin{eqnarray}
\begin{split}
&(a) \  q \in \{2r,2r+1\}.\\
&(b)  \  r \hbox{ est impair et } q=2r+2.
\end{split}
\end{eqnarray}
\end{pr}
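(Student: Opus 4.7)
The key reduction is that $\mathfrak{p}_{\mathcal{V}}$, being a parabolic of the semisimple $\mathfrak{s}\mathfrak{o}(E)$, has trivial center. Hence for $g$ strongly regular the commutative stabilizer $\mathfrak{p}_{\mathcal{V}}(g)$ splits as the direct sum of its Cartan--Duflo torus $\mathfrak{j}_g$ and its unipotent radical, whose dimension is $h(\mathcal{V}')$ by Theorem \ref{D.K.T}$\text{(ii)}$. Taking dimensions yields
\[
\rang \mathfrak{p}_{\mathcal{V}} \;=\; \ind \mathfrak{p}_{\mathcal{V}} \;-\; h(\mathcal{V}'),
\]
so that $\text{(i)}$ is equivalent to the numerical identity $\ind \mathfrak{p}_{\mathcal{V}} = h(\mathcal{V}')$.

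I would then establish the combinatorial inequality
\[
\sum_{i=1}^{t} \Bigl[\tfrac{a_i}{2}\Bigr] \;\geq\; h(\mathcal{V}), \qquad a_i = \dim V_i - \dim V_{i-1},
\]
with equality if and only if $\mathcal{V}$ satisfies $(\ast)$. Under $(\ast)$ each $a_i \in \{1,2\}$ and $a_i = 2$ occurs precisely when $\dim V_{i-1}$ and $\dim V_i$ are both odd; since $\dim V_0 = 0$ is even, $\sum[a_i/2]$ counts the number of indices $i \geq 2$ with $\dim V_{i-1}, \dim V_i$ both odd, which is exactly $h(\mathcal{V})$. A violation of $(\ast)$ introduces either an increment $a_i \geq 3$ or an even increment straddling a parity change, in either case strictly increasing the left-hand side relative to $h(\mathcal{V})$.

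The proof is then a case analysis following the four formulas of Theorem \ref{D.K.T}$\text{(i)}$. If $r$ is even, then $\mathcal{V}' = \mathcal{V}$ and the identity $\ind \mathfrak{p}_{\mathcal{V}} = h(\mathcal{V}')$ simplifies via the lemma to the conjunction of $(\ast)$ and $[q/2] = r$, i.e.\ $q \in \{2r, 2r+1\}$. If $r$ is odd with $r < q/2$, the analogous calculation (with $[(q-1)/2]$ replacing $[q/2]$) gives $q \in \{2r+1, 2r+2\}$, which together with the parity of $r$ corresponds to the $q = 2r+1$ branch of $\text{(a)}$ and to condition $\text{(b)}$.

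The delicate case is $r$ odd with $r = q/2$, where $\mathcal{V}' = \mathcal{V} \setminus \{V\}$ and Theorem \ref{D.K.T}$\text{(i)}$ splits into two subformulas, the index receiving a correction of $\pm 1$ according to whether $\dim V_{t-1} < r-1$ or $\dim V_{t-1} = r-1$. Correspondingly $h(\mathcal{V}')$ differs from $h(\mathcal{V})$ by $0$ or $1$ according to the same parity of $\dim V_{t-1}$, since under $(\ast)$ this parity is prescribed by $a_t$. I expect the main technical point to be the careful matching of these two $\pm 1$ corrections against the equality case of the lemma, so as to reduce $\ind \mathfrak{p}_{\mathcal{V}} = h(\mathcal{V}')$ in this regime precisely to $(\ast)$ together with the $q = 2r$ branch of $\text{(a)}$. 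Assembling all four cases then recovers the conditions $\text{(a)}$ and $\text{(b)}$ as stated.
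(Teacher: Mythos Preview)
Your approach is essentially the same as the paper's: both reduce to the numerical identity $\ind \mathfrak{p}_{\mathcal{V}} = h(\mathcal{V}')$ via the triviality of the center and then run through the four index formulas of Theorem~\ref{D.K.T}. The paper uses your combinatorial inequality $\sum_i [a_i/2] \geq h(\mathcal{V})$ implicitly rather than stating it as a lemma, and where you flag the case $r$ odd with $r=q/2$ as delicate, the paper simply declares ``La proposition est alors claire dans ce cas''.
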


\begin{proof}:
Remarquons tout d'abord que l'algèbre de lie $\mathfrak{p}_{\mathcal{V}}$ est de \rang nul si et seulement si le nombre $h(\mathcal{V}^{\prime})$ est égal à l'indice de $\mathfrak{p}_{\mathcal{V}}$. D'après le théorème \ref{D.K.T}, il suffit de considérer les  cas suivants:\\
\\On suppose que $\dim V=r$ est pair de sorte que $\mathcal{V}^{\prime}=\mathcal{V}$. Dans ce cas, on a $$ \ind(\mathfrak{p}_{\mathcal{V}})= [\frac{q}{2}]-r + \sum_{i=1}^{t}[\frac{1}{2}(\dim V_{i}-\dim V_{i-1})].$$
Par suite, on a $$\rang \mathfrak{p}_{\mathcal{V}}=0 \hbox{ si et seulement si } [\frac{q}{2}]-r =0 \hbox{ et } \sum_{i=1}^{t}[\frac{1}{2}(\dim V_{i}-\dim V_{i-1})]=h(\mathcal{V}).$$
 Ainsi, $\mathfrak{p}_{\mathcal{V}}$ est de \rang nul si et seulement si  $q \in \{2r,2r+1\}$ et le drapeau $\mathcal{V}$ vérifie la condition $(\ast)$.\\
\\On suppose que $\dim V=r$ est impair et $r< \frac{q}{2}$ de sorte que $\mathcal{V}^{\prime}=\mathcal{V}$. Dans ce cas, on a $$ \ind(\mathfrak{p}_{\mathcal{V}})= [\frac{q-1}{2}]-r + \sum_{i=1}^{t}[\frac{1}{2}(\dim V_{i}-\dim V_{i-1})].$$
Par suite, on a $$\rang \mathfrak{p}_{\mathcal{V}}=0 \hbox{ si et seulement si } [\frac{q-1}{2}]-r =0 \hbox{ et } \sum_{i=1}^{t}[\frac{1}{2}(\dim V_{i}-\dim V_{i-1})]=h(\mathcal{V}).$$
 Ainsi, $\mathfrak{p}_{\mathcal{V}}$ est de \rang nul si et seulement si  $q \in \{2r+1,2r+2\}$ et le drapeau $\mathcal{V}$ vérifie la condition $(\ast)$.\\
\\On suppose maintenant que $\dim V=r$ est impair, $r= \frac{q}{2}$. Dans ces conditions et compte tenu du théorème \ref{D.K.T}, on a
$$\ind(\mathfrak{p}_{\mathcal{V}})=\left\{
                                    \begin{array}{ll}
                                     \sum_{i=1}^{t}[\frac{1}{2}(\dim V_{i}-\dim V_{i-1})]-1  & \hbox{ si $\dim V_{t-1}< r-1$ } \\
                                      \sum_{i=1}^{t}[\frac{1}{2}(\dim V_{i}-\dim V_{i-1})]+1 & \hbox{ si $\dim V_{t-1}= r-1$ }
                                    \end{array}
                                  \right.
$$
La proposition est alors claire dans ce cas.
\end{proof}
\subsection{}\label{}
On garde les notations du numéro précédent. Si $u,v \in E$, on définit l'élément $u\wedge_{B}v$ de $\mathfrak{s}\mathfrak{o}(E)$ en posant, pour $x \in E$:
$$u\wedge_{B}v (x)= B(x,u) v- B(x,v) u.$$
Il existe une unique application linéaire $\wedge_{B}:E\otimes E\mapsto \mathfrak{s}\mathfrak{o}(E)$ telle que $\wedge_{B}(u \otimes v)=u \wedge_{B} v$, laquelle passe au quotient en un isomorphisme d'espaces vectoriels de $\wedge^{2}E$ sur $\mathfrak{s}\mathfrak{o}(E)$. Plus généralement, si V, W sont des sous-espaces de $E$ tels que $V\cap W=\{0\}$, $\wedge_{B}$ induit un isomorphisme d'espaces vectoriels de $\wedge^{2}V$ ( resp. $V\otimes W$ ) sur un sous-espace de $\mathfrak{s}\mathfrak{o}(E)$ noté $\Lambda^{2}_{B}V$ (resp. $V \Lambda_{B} W$).\\
Si $u_{1}, u_{2}, v_{1}, v_{2} \in E$, on a
\begin{eqnarray}
\begin{split}
[u_{1}\wedge_{B} v_{1}, u_{2}\wedge_{B}v_{2}]= &B(u_{1},u_{2})v_{1}\wedge_{B} v_{2}+ B(u_{1},v_{2})u_{2}\wedge_{B} v_{1}\\
&+ B(v_{1},u_{2})u_{1}\wedge_{B} v_{2}+ B(v_{1},v_{2})u_{1}\wedge_{B} u_{2}.\\
\end{split}
\end{eqnarray}
On munit $\mathfrak{s}\mathfrak{o}(E)$ de la forme bilinéaire symétrique non dégénérée et invariante $L(X,Y)=-\frac{1}{2}Tr XY$. Si
$u_{1}, u_{2}, v_{1}, v_{2} \in E$, on a $$L(u_{1}\wedge_{B} v_{1},u_{2}\wedge_{B} v_{2})=det \left(
                                                                                                \begin{array}{cc}
                                                                                                  B(u_{1},u_{2}) & B(u_{1},v_{2}) \\
                                                                                                  B(v_{1},u_{2}) & B(v_{1},v_{2}) \\
                                                                                                \end{array}
                                                                                              \right).
$$
\subsubsection{}\label{not}
Soit $V$ un sous-espace isotrope  non nul de $E$ et $W$ un supplémentaire dans $E$ de l'orthogonal $V^{\bot_{B}}$ de V pour $B$. Alors $B$ induit une dualité entre $V$ et $W$, la restriction de $B$ \`{a} $F=(V\oplus W)^{\bot_{B}}$ est non dégénérée et on a $E=V\oplus F\oplus W$ de sorte que $V^{\bot_{B}}=V\oplus F$. On identifie les algèbres de Lie $\mathfrak{g}\mathfrak{l}(V)$ et $\mathfrak{s}\mathfrak{o}(F)$ \`{a} des sous-algèbres de Lie de $\mathfrak{s}\mathfrak{o}(E)$ de la manière suivante:\\
- si $X \in \mathfrak{g}\mathfrak{l}(V)$, on l'étend en un élément encore noté $X$ de $\mathfrak{s}\mathfrak{o}(E)$ en décidant que $X_{|F}=0$ et, si $y \in W$, $X.y$ est l'élément de $W$ tel que $$ B(X.x , y)+  B(x , X.y)=0, x \,\,\in V,$$
- si $X \in \mathfrak{s}\mathfrak{o}(F)$, on l'étend en un élément encore noté $X$ de $\mathfrak{s}\mathfrak{o}(E)$ en décidant que $X_{|F^{\bot_{B}}}=0$.\\
De plus, si $X \in \mathfrak{g}\mathfrak{l}(V)$, $Y \in \mathfrak{s}\mathfrak{o}(F)$, $u \in F$, $v,v^{\prime} \in V$, on a
\begin{eqnarray}
\begin{split}
[X, u\wedge_{B}v]&=u\wedge_{B}X.v,\\
[Y, u\wedge_{B}v]&=Y.u\wedge_{B}v,\\
[Y, v\wedge_{B}v^{\prime}]&=0,\\
[X, v\wedge_{B}v^{\prime}]&=X. v\wedge_{B}v^{\prime}+ v\wedge_{B}X.v^{\prime}.
\end{split}
\end{eqnarray}
- La forme $L$ permet d'identifier $\wedge_{2}^{B}V^{\ast}$ (rep.$(F\wedge_{B}V)^{\ast}$) avec $\wedge_{2}^{B}W$ (resp.$F\wedge_{B}V$).\\

Le théorème suivant est le résultat principal de ce travail.
\begin{theo}\label{stab de p}
Soit $E$ un espace vectoriel de dimension finie $q\geq 3$ sur $\mathbb{K}$ muni d'une forme bilinéaire symétrique $B$ non dégénérée  et $\mathcal{V}=\{ \{0\}=V_{0} \varsubsetneq V_{1} \varsubsetneq V_{2}\varsubsetneq...\varsubsetneq V_{t-1} \varsubsetneq V_{t}=V\}$ un drapeau de sous-espaces isotropes de $E$ avec $\dim V\geq 1$. On note $\mathfrak{p}_{\mathcal{V}}$ la sous-algèbre parabolique de $\mathfrak{s}\mathfrak{o}(E)$ qui stabilise le drapeau $\mathcal{V}$. Alors les assertions suivantes sont équivalentes:\\
$\text{(i)}$ $\mathfrak{p}_{\mathcal{V}}$ admet une forme linéaire stable.\\
$\text{(ii)}$ $\mathfrak{p}_{\mathcal{V}}$ est quasi-réductive.
\end{theo}
\begin{proof}
On pose $\mathfrak{g}=\mathfrak{p}_{\mathcal{V}}$ et $\dim V=r$. D'après \ref{red.parab}, on peut supposer que l'algèbre de Lie $\mathfrak{p}_{\mathcal{V}}$ est de \rang nul c'est à dire que $\mathcal{V}$ vérifie la condition $(\ast)$ et $(q,r)$ vérifie $(4.1)$. Il s'agit de montrer que si l'indice de $\mathfrak{p}_{\mathcal{V}}$ est strictement positif, alors $\mathfrak{p}_{\mathcal{V}}$ n'admet pas de formes stables.\\
On garde les notations du numéro précédent et on commence par étudier un cas particulier: on suppose que $t\geq 2$ et pour $1\leq i \leq t-1$, $\dim V_{i}$ est impair avec $\dim V_{i}=2i-1$, de sorte que $\mathcal{V}=\{ \{0\}=V_{0} \varsubsetneq V_{1} \varsubsetneq V_{2}\varsubsetneq...\varsubsetneq V_{p} \varsubsetneq V_{p+1}=V\}$, avec $t=p+1$ et $r\in \{2p,2p+1\}$.\\
D'après \cite[5.15]{DKT}, on a $$\mathfrak{g}=\left\{
                                 \begin{array}{ll}
                                   \mathfrak{q}_{\mathcal{V}}\oplus \mathfrak{n}_{V}& \hbox{ si $r=\dim V$ est pair ou $F=0$,} \\
                                   \mathfrak{q}_{\mathcal{V}}\times \mathfrak{s}\mathfrak{o}(F) \oplus \mathfrak{n}_{V}& \hbox{ si $r=\dim V$ est impair et $F\neq0$.}
                                 \end{array}
                               \right.$$
avec $\mathfrak{n}_{V}=F\wedge_{B} V \oplus \mathfrak{z}_{V}$, $\mathfrak{z}_{V}=\Lambda^{2}V$ (le centre de $\mathfrak{n}_{V}$), o\`{u} $\mathfrak{n}_{V}$ est un idéal unipotent de $\mathfrak{g}$ et $\mathfrak{q}_{\mathcal{V}}$ la sous-algèbre parabolique de $\mathfrak{g}\mathfrak{l}(V)$ qui stabilise le drapeau $\mathcal{V}$.\\
Soit $g$ une forme linéaire générique sur $\mathfrak{g}$ et $n$ (resp. $\xi$) sa restriction \`{a} $\mathfrak{n}_{V}$ (resp. $\mathfrak{z}_{V}$). Par généricité de $g$, on peut supposer que $\xi$ est une forme bilinéaire alternée  sur $V$ générique relativement au drapeau $\mathcal{V}$. On choisit une base $e_{1},...,e_{r}$ de $V$ adaptée au drapeau $\mathcal{V}$ telle que, notant $e_{1}^{\ast},...,e_{r}^{\ast}$ la base duale, on ait $\xi= \sum_{1\leq2i \leq r}e_{2i-1}^{\ast} \wedge e_{2i}^{\ast}$. Compte tenu  des numéros 5.16, 5.18 et 5.19 de \cite{DKT}, on peut supposer que $n|_{F\wedge_{B} V}=0$ si $r$ est pair ou $F=0$ et $n=\xi+  u^{\ast}\wedge e_{r}^{\ast}$, avec $u \in F$ non isotrope, si $r$ est impair et $F\neq0$. On note $(E_{ij})_{1\leq i,j \leq r}$ la base de $\mathfrak{g}\mathfrak{l}(V)$ telle que $E_{ij}(e_{k})=\delta_{j,k} e_{i}$, $1\leq k \leq r$.
\\Soit $\mathfrak{r}_{\mathcal{V}}$ la sous-algèbre de Lie de $\mathfrak{q}_{\mathcal{V}}$ qui stabilise la forme bilinéaire $\xi$ sur $V$. Compte tenu des formules $(3.2)$, $(3.7)$ et $(3.8)$ de la démonstration du théorème \ref{stab de r} dont on reprend les notations, on a
\begin{eqnarray}
\begin{split}
\mathfrak{r}_{\mathcal{V}}=\mathfrak{l}_{\mathcal{V}}\oplus \mathfrak{n}_{\mathcal{V}}.
\end{split}
\end{eqnarray}
avec $$ \mathfrak{l}_{\mathcal{V}}= \bigoplus_{j=1}^{s} \mathbb{K} H_{j}\,\,et \,\, \mathfrak{n}_{\mathcal{V}}= (\bigoplus_{j=1}^{p} \mathbb{K} Z_{j})\oplus (\bigoplus_{j=1}^{s-1} \mathbb{K} T_{j}),$$
o\`{u} $s=p$ si $r=2p$ et $s=p+1$ si $r=2p+1$.\\
Soit alors $$ \mathfrak{m}_{\mathcal{V}} =\mathfrak{m}_{\mathcal{V},0} \oplus \mathfrak{m}_{\mathcal{V},1}\oplus \mathfrak{m}_{\mathcal{V},2},$$
le supplémentaire de $\mathfrak{r}_{\mathcal{V}}$ dans $\mathfrak{q}_{\mathcal{V}}$ tel que
\begin{eqnarray}
\begin{split}
 \mathfrak{m}_{\mathcal{V},0}&=\bigoplus_{i=1}^{p}\mathbb{K}S_{i}\,\,\,\,\,\, \hbox{ avec } S_{i}=E_{2i-1,2i-1}+E_{2i,2i},\\
\mathfrak{m}_{\mathcal{V},1}&=\bigoplus_{i=1}^{s-1}\mathbb{K} E_{2i,2i+1},\\
\mathfrak{m}_{\mathcal{V},2}&=\bigoplus_{j>i+1}\mathbb{K} E_{i,j}.\\
\end{split}
\end{eqnarray}
D'une part et compte tenu  des numéros 5.16 et 5.18  de \cite{DKT}, si  $r=2p$ ou $F=0$,  on a
$$\mathfrak{h}:=\mathfrak{g}(n)=\mathfrak{r}_{\mathcal{V}}\oplus \mathfrak{z}_{V}.$$
D'autre part et compte tenu du numéro 5.19 de \cite{DKT}, si  $r=2p+1$  et $F\neq0$, on a
$$\mathfrak{h}:=\mathfrak{g}(n)=\mathfrak{r}_{\mathcal{V}^{\prime}} \oplus \mathfrak{t}_{V^{\prime}} \oplus \mathfrak{n}_{V}(n),$$
o\`{u} $\mathcal{V}^{\prime}=\{ \{0\}=V^{\prime}_{0} \varsubsetneq V^{\prime}_{1}=V_{1} \varsubsetneq V^{\prime}_{2}=V_{2}\varsubsetneq...\varsubsetneq  V^{\prime}_{p}=V_{p} \varsubsetneq V^{\prime}_{p+1}=V^{\prime} \}$ est le drapeau obtenu  en remplaçant l'espace $V$ par $V^{\prime}$ ($\xi|_{V^{\prime}}$ est symplectique et $\mathcal{V}^{\prime}$  est en position générique par rapport \`{a} $\xi|_{V^{\prime}}$), $\mathfrak{t}_{V^{\prime}}= \mathbb{K} (T_{p}- u \wedge_{B}e_{2p-1} )$ et $\mathfrak{n}_{V}(n)=F\wedge_{B}e_{2p+1} \oplus \mathfrak{z}_{V}$. De plus, on a $\mathfrak{h}+ \mathfrak{n}_{V}=\mathfrak{r}_{\mathcal{V}^{\prime}}\oplus \mathbb{K} T_{p} \oplus \mathfrak{n}_{V}$.\\
Quitte alors \`{a} translater $g$ par un élément de $exp\,\, \mathfrak{n}_{V}$, on peut supposer que $g|_{\mathfrak{m}_{\mathcal{V}}}=0$ dans les deux cas traités.\\
\\On pose $\zeta_{i}=g(Z_{i})$, $1 \leq i \leq p$, $\tau_{i}=g(T_{i})$, $ 1 \leq i \leq s-1$.
Par généricité de $g$, on peut supposer que $\prod_{i=1}^{p} \zeta_{i}\neq 0$ si $r$ est pair et  $\tau_{p} \prod_{i=1}^{p} \zeta_{i}\neq 0 $ si $r$ est impair. Soit $\lambda=g|_{\mathfrak{r}_{\mathcal{V}}}$. Dans ces conditions et compte tenu de la démonstration du théorème \ref{stab de r}, on peut également supposer que $g|_{\mathfrak{l}_{\mathcal{V}}}=\lambda|_{\mathfrak{l}_{\mathcal{V}}}=0$. De plus et comme $\lambda$ est générique, il suit des calculs  de la démonstration du théorème \ref{stab de r} que $$\mathfrak{r}_{\mathcal{V}}(\lambda)=\bigoplus_{k=1}^{p-1}\mathbb{K} W_{k},$$
avec $$W_{k}=E_{2k-1,2k+2}+E_{2k+1,2k}-\frac{\tau_{k}}{2}(\frac{1}{\zeta_{k}} E_{2k-1,2k}+ \frac{1}{\zeta_{k+1}} E_{2k+1,2k+2}),\,\,1 \leq k \leq p-1.$$
Avec ces hypothèses sur la forme linéaire $g$ et ces notations, on a les résultats suivants:
\begin{lem}\label{lem.theo}
On suppose que $r=2p$ ou $F=0$. Alors, on a\\

$\text{(i)}$\,\, $\mathfrak{g}(g)=\bigoplus_{k=1}^{p-1}\mathbb{K} C_{k}$, avec $$C_{k}=W_{k}+\frac{1}{2}\tau_{k} (e_{2k-1}\wedge e_{2k}-e_{2k+1}\wedge e_{2k+2})+\frac{1}{2}\zeta_{k+1} e_{2k-1}\wedge e_{2k+2}+\frac{1}{2}\zeta_{k} e_{2k}\wedge e_{2k+1}.$$

$\text{(ii)}$\,\,$[\mathfrak{g},\mathfrak{g}(g)]\cap \mathfrak{g}(g) \neq \{0\}$.
\end{lem}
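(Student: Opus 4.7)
The plan for (i) is to apply Lemma \ref{duf82} to the unipotent ideal $\mathfrak{n}=\mathfrak{n}_{V}$ with $n=g|_{\mathfrak{n}_{V}}$ and $h=g|_{\mathfrak{h}}$. I first compute $\mathfrak{h}(h)$: since every element of $\mathfrak{r}_{\mathcal{V}}$ stabilizes $\xi$, the identity $h([R,v\wedge_{B} w])=\xi(Rv,w)+\xi(v,Rw)=0$ holds for $R\in\mathfrak{r}_{\mathcal{V}}$ and $v,w\in V$, hence $\mathfrak{h}(h)=\mathfrak{r}_{\mathcal{V}}(\lambda)\oplus\mathfrak{z}_{V}$. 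Next I compute $\mathfrak{n}_{V}(n)$: in the case $F=0$ this is tautologically $\mathfrak{z}_{V}$, while in the case $r=2p$ with $F\neq 0$ the formula $[u\wedge_{B} v,u'\wedge_{B} v']=B(u,u')\,v\wedge_{B} v'$, combined with the non-degeneracy of $\xi$ on $V$ and of $B$ on $F$, forces the $F\wedge_{B} V$-component of $\mathfrak{n}_{V}(n)$ to vanish, giving $\mathfrak{n}_{V}(n)=\mathfrak{z}_{V}$ in both subcases. Lemma \ref{duf82}(ii) then reads $\mathfrak{g}(g)+\mathfrak{z}_{V}=\mathfrak{r}_{\mathcal{V}}(\lambda)\oplus\mathfrak{z}_{V}$, and a dimension count using $\ind\mathfrak{p}_{\mathcal{V}}=p-1$ from Theorem \ref{D.K.T} forces $\mathfrak{g}(g)\cap\mathfrak{z}_{V}=\{0\}$. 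The projection modulo $\mathfrak{z}_{V}$ therefore identifies $\mathfrak{g}(g)$ with $\mathfrak{r}_{\mathcal{V}}(\lambda)=\bigoplus_{k}\mathbb{K} W_{k}$, so for each $k$ there is a unique $z_{k}\in\mathfrak{z}_{V}$ with $C_{k}:=W_{k}+z_{k}\in\mathfrak{g}(g)$.

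To determine the explicit $z_{k}$, I would impose $g([C_{k},Y])=0$ as $Y$ runs over a basis of a complement of $\mathfrak{h}$ in $\mathfrak{g}$, namely $\mathfrak{m}_{\mathcal{V}}=\mathfrak{m}_{\mathcal{V},0}\oplus\mathfrak{m}_{\mathcal{V},1}\oplus\mathfrak{m}_{\mathcal{V},2}$ (and $F\wedge_{B} V$ when applicable). Using the matrix unit relations $[E_{ab},E_{cd}]=\delta_{bc}E_{ad}-\delta_{ad}E_{cb}$, I compute the $\mathfrak{n}_{\mathcal{V}}$-projection of $[W_{k},Y]$ for each basis vector $Y$; only those $Y$ whose indices meet $\{2k-1,2k,2k+1,2k+2\}$ contribute non-trivially, and matching against $g([z_{k},Y])=-\xi(Yv,w)-\xi(v,Yw)$, summed over the wedge summands of $z_{k}$, yields a small linear system in the four unknown coefficients of $z_{k}$ on the basis vectors $e_{2k-1}\wedge e_{2k}$, $e_{2k+1}\wedge e_{2k+2}$, $e_{2k-1}\wedge e_{2k+2}$, $e_{2k}\wedge e_{2k+1}$. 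Its unique solution is the displayed formula for $z_{k}$.

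For (ii), Lemma \ref{Tauvel Yu} reduces the task to exhibiting a single non-zero element in $[\mathfrak{g},\mathfrak{g}(g)]\cap\mathfrak{g}(g)$. Mimicking the proof of Theorem \ref{stab de r}, the natural candidate is the grading element $H=\sum_{j}H_{j}\in\mathfrak{l}_{\mathcal{V}}$: the relations used there give $[H,W_{k}]=2W_{k}$, while a direct check shows that each of the four wedge summands of $z_{k}$ has total $H$-weight zero, so $[H,z_{k}]=0$ and $[H,C_{k}]=2W_{k}\in[\mathfrak{g},\mathfrak{g}(g)]$. The obstacle is that $W_{k}\notin\mathfrak{g}(g)$ whenever $z_{k}\neq 0$. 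The plan is therefore to correct $H$ by an element $\eta\in\mathfrak{g}$ with $[\eta,C_{k}]=2z_{k}$, so that $[H+\eta,C_{k}]=2C_{k}\in\mathfrak{g}(g)\cap[\mathfrak{g},\mathfrak{g}(g)]$; in the case $\eta\in\mathfrak{z}_{V}$, since $[\eta,z_{k}]=0$, this reduces to solving $W_{k}\cdot\eta=-2z_{k}$ in $\Lambda^{2}V$. I expect the main difficulty to be the construction of $\eta$: the nilpotent action of $W_{k}$ on $\Lambda^{2}V$ does not in general have $z_{k}$ in its image, so the correction must be sought instead in $\mathfrak{m}_{\mathcal{V}}$ or $F\wedge_{B} V$, or obtained by replacing $C_{k}$ by a suitable linear combination $\sum_{k}a_{k}C_{k}$ for which the various $z_{k}$-obstructions cancel, and the argument must distinguish the subcases $F=0$ and $r=2p$ separately.
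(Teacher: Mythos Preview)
Your approach to (i) is correct and essentially the same as the paper's. Both arguments use $\mathfrak{g}(g)\subset\mathfrak{h}(h)=\mathfrak{r}_{\mathcal{V}}(\lambda)\oplus\mathfrak{z}_{V}$ together with the dimension count $\dim\mathfrak{g}(g)=h(\mathcal{V}')=p-1=\dim\mathfrak{r}_{\mathcal{V}}(\lambda)$ to conclude that each $W_{k}$ lifts uniquely to some $C_{k}=W_{k}+z_{k}\in\mathfrak{g}(g)$, and then determine $z_{k}$ by testing $g([C_{k},Y])=0$ against a basis of the complement $\mathfrak{m}_{\mathcal{V}}$. The only difference is expository: the paper first works out the $\mathfrak{so}(8,\mathbb{K})$ case explicitly to guess the shape of $C_{k}$, and then verifies the guess by a direct bracket computation against each generator of $\mathfrak{m}_{\mathcal{V},0},\mathfrak{m}_{\mathcal{V},1},\mathfrak{m}_{\mathcal{V},2}$.

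For (ii) there is a genuine gap: your argument is left unfinished. You correctly observe that the grading element $H=\sum_{j}H_{j}$ satisfies $[H,C_{k}]=2W_{k}\notin\mathfrak{g}(g)$, and you correctly suspect that the correction should come from $\mathfrak{m}_{\mathcal{V}}$, but you do not find it, and the fallback ideas you mention (taking linear combinations $\sum a_{k}C_{k}$, splitting into subcases $F=0$ versus $r=2p$) are unnecessary detours. The paper's solution is a one-line computation: set
\[
R_{k}=E_{2k-1,2k-1}+E_{2k+1,2k+1}\in\mathfrak{q}_{\mathcal{V}}\subset\mathfrak{g}.
\]
Every matrix-unit summand of $W_{k}$ has its row index in $\{2k-1,2k+1\}$ and its column index outside this set, so $[R_{k},W_{k}]=W_{k}$; and every wedge summand of $z_{k}$ involves exactly one of $e_{2k-1},e_{2k+1}$, so $[R_{k},z_{k}]=z_{k}$. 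Hence $[R_{k},C_{k}]=C_{k}$ and (ii) follows immediately. In your own language, the correction you were looking for is $\eta=S_{k}+S_{k+1}\in\mathfrak{m}_{\mathcal{V},0}$ paired with $H_{k}+H_{k+1}$ rather than the full $H$: since $S_{k}+S_{k+1}$ acts as the identity on $\mathrm{span}(e_{2k-1},e_{2k},e_{2k+1},e_{2k+2})$, it commutes with $W_{k}$ but acts by $2$ on each wedge in $z_{k}$, and $(H_{k}+H_{k+1})+(S_{k}+S_{k+1})=2R_{k}$.
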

\begin{proof}
On commence par montrer le lemme pour le cas d'un parabolique de $\mathfrak{s}\mathfrak{o}(8, \mathbb{K})$ qui nous sera utile pour le cas général: Dans \cite{Tauvel-Yu2004}, Tauvel et Yu ont donné un exemple d'une sous-algèbre parabolique minimale de $\mathfrak{s}\mathfrak{o}(8, \mathbb{K})$ non quasi-réductive qui n'admet aucune forme linéaire stable. Cette sous-algèbre parabolique est la sous-algèbre de $\mathfrak{s}\mathfrak{o}(8, \mathbb{K})$, qui stabilise le drapeau formé de sous-espaces isotropes suivant
$$\mathcal{V}=\{\{0\}\varsubsetneq V_{1}\varsubsetneq V_{2}\varsubsetneq V_{3}=V\},\hbox{ avec } \dim V_{1}=1, \dim V_{2}=3 \hbox{ et } \dim V_{3}=\dim V=4.$$
Dans ce cas, on a $\dim V=4$ est pair et $F=0$ de sorte que $\mathfrak{p}_{\mathcal{V}}=\mathfrak{q}_{\mathcal{V}}\oplus \mathfrak{z}_{V}$.\\
Il résulte du théorème \ref{D.K.T} que $\mathfrak{p}_{\mathcal{V}}$ est une algèbre de Lie non quasi-réductive d'indice 1. Soit $\{e_{1},e_{2},e_{3},e_{4}\}$ une base de $V$ adaptée au drapeau $\mathcal{V}$ telle que, notant $e_{1}^{\ast},e_{2}^{\ast},e_{3}^{\ast},e_{4}^{\ast}$ la base duale, on ait $\xi=e_{1}^{\ast}\wedge e_{2}^{\ast} + e_{3}^{\ast}\wedge e_{4}^{\ast}$.\\
Dans ce cas, on a $\mathfrak{r}_{\mathcal{V}}= \mathfrak{l}_{\mathcal{V}}\oplus \mathfrak{n}_{\mathcal{V}}$, avec
\begin{eqnarray*}
\begin{split}
 \mathfrak{l}_{\mathcal{V}}&=\mathbb{K} H_{1}\oplus\mathbb{K} H_{2}, \hbox{ o\`{u} } H_{1}=E_{11}-E_{22} \hbox{ et } H_{2}=E_{33}-E_{44},\\   \mathfrak{n}_{\mathcal{V}}&=\mathbb{K} Z_{1}\oplus\mathbb{K} Z_{2}\oplus\mathbb{K} T, \hbox{ o\`{u} } Z_{1}=2E_{12},\, Z_{2}= 2E_{34} \hbox{ et }T=E_{14}+E_{32},
\end{split}
\end{eqnarray*}
et $$\mathfrak{m}_{\mathcal{V}}=\mathbb{K}(E_{11}+E_{22})\oplus\mathbb{K}(E_{33}+E_{44})\oplus \mathbb{K}E_{23}\oplus(\oplus_{4\geq j > i+1} \mathbb{K}E_{ij}).$$
Compte tenu de nos hypothèses sur la forme linéaire $g$, on peut supposer que $g|_{\mathfrak{m}_{\mathcal{V}}}=g|_{\mathfrak{l}_{\mathcal{V}}}=0$. D'autre part, on a $$\mathfrak{g}(g)\subset \mathfrak{r}_{\mathcal{V}}(\lambda)\oplus \mathfrak{z}_{V}=\mathfrak{h}(h):=\mathfrak{g}(g)+\mathfrak{n}_{V}(n)=\mathfrak{g}(g)+ \mathfrak{z}_{V} \hbox{ o\`{u} } h=g|_{\mathfrak{h}}.$$
Soit $X \in \mathfrak{h}(h) $; il s'écrit sous la forme $$X= \alpha\,\, W + \sum_{1\leq i < j\leq 4} \,z_{ij}\,\, e_{i}\wedge e_{j}=\alpha\,\, (T-\frac{\tau}{2}(\frac{1}{\zeta_{1}} Z_{1} + \frac{1}{\zeta_{2}} Z_{2})) + \sum_{1\leq i < j\leq 4}\,\, z_{ij}\,\, e_{i}\wedge e_{j}.$$
On a $\dim\,\mathfrak{g}(g)=h(\mathcal{V})=1$ de sorte que les $z_{ij}$ sont entièrement déterminés par $\alpha$.\\
Compte tenu de ce qui précède, on a
$$                                   \left\{
                                          \begin{array}{ll}
                                            \langle g, [X,Y]  \rangle=0 & \hbox{$ \forall \,\, Y \,\, \in \mathfrak{z}_V$,} \\                                             \langle g, [X,Y] \rangle=0 & \hbox{$ \forall \,\, Y\,\,  \in \mathfrak{r}_{\mathcal{V}}$,}
                                          \end{array}\
                                        \right.$$
de sorte que  $$X \in \mathfrak{g}(g) \hbox{ si et seulement si } \langle g, [X,Y] \rangle=0, \,\,\,\forall \,\,\,Y\in \mathfrak{m}_{\mathcal{V}}.$$
Ainsi, on vérifie aisément que $X \in \mathfrak{g}(g)$  si et seulement si $$\left\{
                                                           \begin{array}{ll}
                                                             z_{12}=\frac{1}{2}\,\alpha\, \tau & \hbox{} \\
                                                              z_{14}=\frac{1}{2} \, \alpha \,  \zeta_{2} & \hbox{} \\
                                                              z_{23}=\frac{1}{2} \, \alpha \,  \zeta_{1} & \hbox{} \\
                                                              z_{34}=-\frac{1}{2}\,\alpha \,\tau & \hbox{} \\
                                                             z_{13}=z_{24}=0.& \hbox{}
                                                           \end{array}
                                                         \right.
$$
Par suite, on a $$\mathfrak{g}(g)=\mathbb{K}\, C,$$
avec
\begin{equation}C=T-\frac{\tau}{2}(\frac{1}{\zeta_{1}} Z_{1} + \frac{1}{\zeta_{2}} Z_{2})+\frac{1}{2}\,\, \tau\,\, (e_{1}\wedge e_{2}-e_{3}\wedge e_{4})+\frac{1}{2}\,\, \xi_{2}\,\, e_{1}\wedge e_{4}+\frac{1}{2}\,\, \xi_{1}\,\, e_{2}\wedge e_{3}.
\end{equation}
Mais alors, on remarque que $[E_{11}+E_{33}, C]=C$ de sorte que $[\mathfrak{g},\mathfrak{g}(g)]\cap \mathfrak{g}(g) \neq \{0\}$.\\
Maintenant on va montrer le lemme dans la cas général. En nous inspirant de $(4.6)$ on définit, pour $1\leq k \leq p-1$, les vecteurs $C_{k}$ en posant $$C_{k}=W_{k}+\frac{1}{2}\tau_{k}\, (e_{2k-1}\wedge e_{2k}-e_{2k+1}\wedge e_{2k+2})+\frac{1}{2}\zeta_{k+1}\, e_{2k-1}\wedge e_{2k+2}+\frac{1}{2}\zeta_{k}\, e_{2k}\wedge e_{2k+1}.$$
Remarquons tout d'abord que $C_{k}\in \mathfrak{h}(h)$ avec $h=g|_{\mathfrak{h}}$, de sorte que $$C_{k} \in \mathfrak{g}(g) \hbox{ si et seulement si } \langle g, [C_{k},Y] \rangle=0, \,\,\,\forall \,\,\,Y\in \mathfrak{m}_{\mathcal{V}_{j}}, j=0,1,2.$$
Compte tenu de $(4.5)$, on a
$$C_{k}\in \mathfrak{g}(g) \hbox{ si et seulement si }
\left\{
       \begin{array}{ll}
              \,\, \langle g, [S_{j},C_{k}]  \rangle=0,          &\hbox{$ 1\leq j\leq p$,} \\
               \,\, \langle g, [E_{2j,2j+1},C_{k}]  \rangle=0,    & \hbox{$1\leq j\leq s-1$,} \\
              \,\,  \langle g, [E_{i,j},C_{k}]  \rangle=0,        & \hbox{$2\leq i+1 < j \leq r$.}
      \end{array}
\right.$$
D'une part, on a
\begin{eqnarray*}
\begin{split}
[S_{j},C_{k}]&=\delta_{k,j} ( E_{2k-1,2k+2}-E_{2k+1,2k}
+\tau_{k}\,e_{2k-1}\wedge e_{2k}+\frac{1}{2}\zeta_{k+1}\,\,e_{2k-1}\wedge e_{2k+2}\\
&+\frac{1}{2}\zeta_{k}\,e_{2k}\wedge e_{2k+1})+\delta_{k+1,j} (- E_{2k-1,2k+2}+E_{2k+1,2k}-\tau_{k}\,e_{2k+1}\wedge e_{2k+2}\\
&+\frac{1}{2}\zeta_{k+1}\,e_{2k-1}\wedge e_{2k+2}+\frac{1}{2}\zeta_{k}\,e_{2k}\wedge e_{2k+1})\\
&=\delta_{k,j}\, (-T_{k}+ \tau_{k}\,e_{2k-1}\wedge e_{2k})+ \delta_{k+1,j}\, (T_{k}- \tau_{k}\,e_{2k+1}\wedge e_{2k+2})\, \textrm{mod}(\mathfrak{m}_{\mathcal{V}}+\ker(n)),
\end{split}
\end{eqnarray*}
de sorte que, compte tenu de nos hypothèses sur la forme linéaire $g$, $$\langle g, [S_{j},C_{k}]  \rangle=0,\,\,\hbox{ pour tout } 1\leq j\leq p.$$
D'autre part, on a
\begin{eqnarray*}
\begin{split}
[E_{2j,2j+1},C_{k}]&=\delta_{k-1,j} ( E_{2k-2,2k+2}- \frac{\tau_{k}}{2\zeta_{k}}E_{2k-2,2k}+ \frac{1}{2}\,\tau_{k}\,e_{2k-2}\wedge e_{2k}\\
&+\frac{1}{2}\zeta_{k+1}\,\,e_{2k-2}\wedge e_{2k+2})
+\delta_{k,j} (- E_{2k+1,2k+1}+E_{2k,2k}\\
&+  \frac{\tau_{k}}{2\zeta_{k}}E_{2k-1,2k+1}
- \frac{\tau_{k}}{2\zeta_{k+1}}E_{2k,2k+2}- \frac{1}{2} \tau_{k}\,e_{2k}\wedge e_{2k+2})\\
&+\delta_{k+1,j} (- E_{2k-2,2k+3}+ \frac{\tau_{k}}{2\zeta_{k+1}}E_{2k+1,2k+3})\\
&=\delta_{k,j} (H_{k}-H_{k+1}) \,\textrm{mod}( \mathfrak{m}_{\mathcal{V}}+\ker(n))
\end{split}
\end{eqnarray*}
comme on a $g(H_{k+1})=g(H_{k})=0$, il suit de nos hypothèses sur la forme linéaire $g$, que $$\langle g, [E_{2j,2j+1},C_{k}]  \rangle=0,\,\,\hbox{ pour tout } 1\leq j\leq s-1.$$
\\Enfin, pour calculer le crochet $[E_{i,j},C_{k}]$ avec $2\leq i+1 < j \leq r$, on distingue suivant la parité de $i$ et $j$:\\
(i) Pour  $i=2l$ et $j=2l^{\prime}$ avec $l+1\leq l^{\prime}$, on a \\
\begin{eqnarray*}
\begin{split}
[E_{2l,2l^{\prime}},C_{k}]&=\delta_{k+1,l} (- E_{2k-1,2l^{\prime}}+\frac{\tau_{k}}{2\zeta_{k+1}}\, E_{2k+1,2l^{\prime}})+\delta_{k,l}\,(-E_{2k+1,2l^{\prime}}+\frac{\tau_{k}}{2\zeta_{k}}  E_{2k-1,2l^{\prime}})\\
&+ \frac{1}{2}\,\,\delta_{k,l^{\prime}}(\tau_{k}\,e_{2k-1}\wedge e_{2l}+\zeta_{k}\,\,e_{2l}\wedge e_{2k+1})\\
&+\frac{1}{2}\,\,\delta_{k+1,l^{\prime}}(-\tau_{k}\,e_{2k+1}\wedge e_{2l}+\zeta_{k+1}\,\,e_{2k-1}\wedge e_{2l})\\
&=\delta_{k,l} \delta_{k+1,l^{\prime}}(-\frac{1}{2}\, Z_{k+1}+ \frac{1}{2}\,\zeta_{k+1}\,e_{2k-1}\wedge e_{2k})\,\textrm{mod}( \mathfrak{m}_{\mathcal{V}}+\ker(n)),\\
\end{split}
\end{eqnarray*}
de sorte que $$\langle g, [E_{2l,2l^{\prime}},C_{k}]  \rangle=0,\,\,\hbox{ pour tout } 1\leq l< l^{\prime}.$$
(ii) Pour  $i=2l$ et $j=2l^{\prime}+1$ avec $1\leq l <l^{\prime}$, on a\\
\begin{eqnarray*}
\begin{split}
[E_{2l,2l^{\prime}+1},C_{k}]&=\delta_{k+1,l} ( -E_{2k-1,2l^{\prime}+1}+\frac{\tau_{k}}{2\zeta_{k+1}}\, E_{2k+1,2l^{\prime}+1})\\
&+\delta_{k,l}\,(-E_{2k+1,2l^{\prime}+1}+\frac{\tau_{k}}{2\zeta_{k}}  E_{2k-1,2l^{\prime}+1})\\
&+\delta_{k-1,l^{\prime}}(E_{2l,2k+2}-\frac{\tau_{k}}{2\zeta_{k}}\, E_{2l,2k} +\frac{1}{2}\,\tau_{k}\,e_{2l}\wedge e_{2k}+\frac{1}{2}\,\zeta_{k+1}\,e_{2l}\wedge e_{2k+2})\\
&+\delta_{k,l^{\prime}}(E_{2l,2k}-\frac{\tau_{k}}{2\zeta_{k+1}}\, E_{2l,2k+2} -\frac{1}{2}\,\tau_{k}\,e_{2l}\wedge e_{2k+2}+\frac{1}{2}\,\zeta_{k}\,e_{2k}\wedge e_{2l}).\\
\end{split}
\end{eqnarray*}
Remarquons que, pour $1\leq l < l^{\prime}$, $[E_{2l,2l^{\prime}+1},C_{k}] \in \ker(n)+\mathfrak{m}_{\mathcal{V}}$ de sorte que $$\langle g, [E_{2l,2l^{\prime}+1},C_{k}]  \rangle=0.$$
(iii) Pour  $i=2l+1$ et $j=2l^{\prime}$ avec $1\leq l+1< l^{\prime}$, on a
\begin{eqnarray*}
\begin{split}
[E_{2l+1,2l^{\prime}},C_{k}]&= \frac{1}{2}\,\,\delta_{k,l^{\prime}}(\tau_{k}\,e_{2k-1}\wedge e_{2l+1}+\zeta_{k}\,\,e_{2l+1}\wedge e_{2k+1})\\
&+\frac{1}{2}\,\,\delta_{k+1,l^{\prime}}(-\tau_{k}\,e_{2k+1}\wedge e_{2l+1}+\zeta_{k+1}\,\,e_{2k-1}\wedge e_{2l+1}).\\
\end{split}
\end{eqnarray*}
On voit donc que, pour $1\leq l+1 < l^{\prime}$,  $[E_{2l,2l^{\prime}+1},C_{k}] \in \ker(n)$. Par suite $$\langle g, [E_{2l,2l^{\prime}+1},C_{k}]  \rangle=0.$$
(iv) Pour  $i=2l+1$ et $j=2l^{\prime}+1$ avec $0\leq l <l^{\prime}$, on a \\
\begin{eqnarray*}
\begin{split}
[E_{2l+1,2l^{\prime}+1},C_{k}]&=\delta_{k-1,l^{\prime}} ( E_{2l+1,2k+2}-\frac{\tau_{k}}{2\zeta_{k}}\, E_{2l+1,2k}+\frac{1}{2}\,\tau_{k}\,e_{2l+1}\wedge e_{2k}\\
&+\frac{1}{2}\,\zeta_{k+1}\,e_{2l+1}\wedge e_{2k+2})
+\delta_{k,l^{\prime}}(E_{2l+1,2k}-\frac{\tau_{k}}{2\zeta_{k+1}}\, E_{2l+1,2k+2}\\
&-\frac{1}{2}\,\tau_{k}\,e_{2l+1}\wedge e_{2k+2}-\frac{1}{2}\,\zeta_{k}\,e_{2l+1}\wedge e_{2k})\\
&=\delta_{k-1,l} \delta_{k,l^{\prime}}(\frac{1}{2}\, Z_{k}- \frac{1}{2}\,\zeta_{k}\,e_{2k-1}\wedge e_{2k})\,\textrm{mod}( \mathfrak{m}_{\mathcal{V}}+\ker(n)),
\end{split}
\end{eqnarray*}
de sorte que, compte tenu de nos hypothèses,   $$\langle g, [E_{2l+1,2l^{\prime}+1},C_{k}]  \rangle=0 \hbox{ pour tout } 0\leq l< l^{\prime}.$$
Par suite, pour $1\leq k \leq p-1$, on a $$\langle g, [C_{k},Y] \rangle=0 \,\,\,\forall \,\,\,Y\in \mathfrak{g}.$$
On a donc  $$ C_{k}\in \mathfrak{g}(g) \hbox{ pour tout } 1\leq k \leq p-1.$$
D'autre part, remarquons que la famille $\{C_{k}, 1\leq k \leq p-1\}$ est une famille libre de $\mathfrak{g}(g)$. De plus, comme $\dim(\mathfrak{g}(g))=h(\mathcal{V})=p-1$, cette famille est maximale et donc elle forme une base de $\mathfrak{g}(g)$. On a donc $$\mathfrak{g}(g)=\bigoplus_{k=1}^{p-1}\mathbb{K} C_{k}.$$
On pose $R_{k}=E_{2k-1,2k-1}+E_{2k+1,2k+1}$. On a $R_{k}\in\mathfrak{g}$ et
$$[R_{k},C_{k}]=C_{k} \hbox{ pour } 1\leq k\leq p-1,$$
ceci montre alors que $[\mathfrak{g},\mathfrak{g}(g)]\cap \mathfrak{g}(g) \neq \{0\}$ de sorte que $\mathfrak{g}$ est non stable puisque $g$ est une forme linéaire générique.
\end{proof}
On suppose maintenant que $r=\dim V=2p+1$ et $F\neq \{0\}$. Ici $\dim F\in\{1,2\}$, $\mathcal{V}=\{ \{0\}=V_{0} \varsubsetneq V_{1} \varsubsetneq V_{2}\varsubsetneq...\varsubsetneq V_{p} \varsubsetneq V_{p+1}=V\}$ et $n=\xi +  u^{\ast}\wedge e_{2p+1}^{\ast}$, avec $u \in F$ non isotrope. On rappelle les vecteurs $C_{k}$ du lemme \ref{lem.theo}.\\
\begin{lem}
On suppose que $r=\dim V$ est impair et $F\neq0$. Alors, on a\\

\text{(i)} $\mathfrak{g}(g)=\bigoplus_{k=1}^{p-1}\mathbb{K} C_{k} \oplus \mathbb{K} D_{p}$, avec

$$D_{p}=T_{p}- u\wedge_{B} e_{2p-1}-\tau_{p} u\wedge_{B}e_{2p+1}-\frac{\tau_{p}}{2\zeta_{p}}\, Z_{p}+\frac{1}{2}\,\,\tau_{p}\, e_{2p-1}\wedge e_{2p}+\frac{1}{2}\, \zeta_{p}\, e_{2p}\wedge e_{2p+1}.$$

\text{(ii)} $[\mathfrak{g},\mathfrak{g}(g)]\cap \mathfrak{g}(g) \neq \{0\}$.
\end{lem}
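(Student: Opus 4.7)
Le plan est de reproduire la strat\'{e}gie du Lemme \ref{lem.theo}. D'apr\`{e}s le Lemme \ref{duf82}, on a $\mathfrak{g}(g)+\mathfrak{n}_V(n)=\mathfrak{h}(h)$ avec $h=g|_{\mathfrak{h}}$, et compte tenu de la description de $\mathfrak{h}$ rappel\'{e}e plus haut, on a $\mathfrak{h}+\mathfrak{n}_V=\mathfrak{r}_{\mathcal{V}'}\oplus\mathbb{K} T_p\oplus\mathfrak{n}_V$; un suppl\'{e}mentaire dans $\mathfrak{g}$ est donc $\mathbb{K} H_{p+1}\oplus\mathfrak{m}_{\mathcal{V}}\oplus\mathfrak{s}\mathfrak{o}(F)$. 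Pour v\'{e}rifier qu'un \'{e}l\'{e}ment $X\in\mathfrak{h}(h)$ appartient \`{a} $\mathfrak{g}(g)$, il suffira, puisque $[X,\mathfrak{n}_V]\subset\ker(n)$, de contr\^{o}ler $\langle g,[X,Y]\rangle=0$ pour $Y$ parcourant ce suppl\'{e}mentaire.

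Pour les $C_k$, $1\leq k\leq p-1$, le support ne fait intervenir que les $e_{2k-1},\dots,e_{2k+2}$ avec $k\leq p-1$; les calculs de crochets du Lemme \ref{lem.theo} s'appliqueront mot pour mot sur les parties de $\mathfrak{m}_{\mathcal{V}}$ d\'{e}j\`{a} rencontr\'{e}es dans le cas pair. Les crochets suppl\'{e}mentaires de $C_k$ avec $H_{p+1}$, $T_p$, les g\'{e}n\'{e}rateurs de $\mathfrak{s}\mathfrak{o}(F)$ et les $E_{i,j}\in\mathfrak{m}_{\mathcal{V}}$ impliquant l'indice $2p+1$ s'annuleront ou retourneront dans $\ker(n)+\mathfrak{m}_{\mathcal{V}}$, gr\^{a}ce \`{a} $g|_{\mathfrak{m}_{\mathcal{V}}}=g|_{\mathfrak{l}_{\mathcal{V}}}=0$ et au fait que ces op\'{e}rateurs agissent trivialement sur les vecteurs composant $C_k$.

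La partie substantielle consistera \`{a} v\'{e}rifier $D_p\in\mathfrak{g}(g)$. Les cinq termes de $D_p$ sont combin\'{e}s pour que, dans les crochets $[D_p,S_p]$, $[D_p,H_{p+1}]$, $[D_p,E_{2p,2p+1}]$, $[D_p,E_{i,j}]$ pour $2\leq i+1<j$ impliquant un des indices $2p-1,2p,2p+1$, et $[D_p,X]$ pour $X\in\mathfrak{s}\mathfrak{o}(F)$, les contributions de $T_p-u\wedge_B e_{2p-1}$, du correctif $-\tau_p\,u\wedge_B e_{2p+1}$, du correctif $-\frac{\tau_p}{2\zeta_p}Z_p$ et des termes centraux $\frac{1}{2}\tau_p\,e_{2p-1}\wedge e_{2p}$, $\frac{1}{2}\zeta_p\,e_{2p}\wedge e_{2p+1}$ se compensent apr\`{e}s \'{e}valuation par $g$. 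Les r\`{e}gles du produit $\wedge_B$ et l'identification donn\'{e}e par $L$ au paragraphe \ref{not} permettront de traduire explicitement le couplage $\langle g,\cdot\rangle$ sur $F\wedge_B V$. La famille $\{C_1,\dots,C_{p-1},D_p\}$ \'{e}tant libre et $\dim\mathfrak{g}(g)=\ind\mathfrak{p}_{\mathcal{V}}=p$ d'apr\`{e}s le Th\'{e}or\`{e}me \ref{D.K.T}, on en d\'{e}duira (i).

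Pour (ii), on posera $R_p=E_{2p-1,2p-1}+E_{2p+1,2p+1}\in\mathfrak{l}_{\mathcal{V}}\subset\mathfrak{g}$. Comme $R_p$ agit comme l'identit\'{e} sur $\mathbb{K} e_{2p-1}\oplus\mathbb{K} e_{2p+1}$ et trivialement ailleurs dans $V$ et sur $F$, chaque terme de $D_p$ sera vecteur propre de $\ad R_p$ de valeur propre $1$, d'o\`{u} $[R_p,D_p]=D_p\ne 0$, ce qui fournira $[\mathfrak{g},\mathfrak{g}(g)]\cap\mathfrak{g}(g)\ne\{0\}$. Le principal obstacle sera le troisi\`{e}me paragraphe : la v\'{e}rification $D_p\in\mathfrak{g}(g)$ m\^{e}le des contributions des trois sous-espaces $\mathfrak{r}_{\mathcal{V}'}$, $\mathfrak{t}_{V'}$ et $\mathfrak{n}_V(n)$, et demandera un suivi minutieux des termes impliquant le vecteur non isotrope $u\in F$, notamment dans les crochets avec $T_p$ et $\mathfrak{s}\mathfrak{o}(F)$.
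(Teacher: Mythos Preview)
Your plan is correct and follows essentially the same route as the paper: verify $C_k,D_p\in\mathfrak{g}(g)$ by bracket computations against generators of a complement (the paper checks against all of $\mathfrak{r}_{\mathcal{V}}$, $\mathfrak{s}\mathfrak{o}(F)$ and $\mathfrak{m}_{\mathcal{V}}$ rather than first isolating $\mathfrak{h}(h)$, and it warms up with the cases $\dim E\in\{7,8\}$ to discover the shape of $D_p$, but since the formula is given this is inessential), then use the dimension count $\dim\mathfrak{g}(g)=h(\mathcal{V})=p$, and for (ii) exhibit $[R_p,D_p]=D_p$. One small slip: $R_p=E_{2p-1,2p-1}+E_{2p+1,2p+1}$ is not in $\mathfrak{l}_{\mathcal{V}}$ (one has $R_p=\tfrac12(S_p+H_p)+H_{p+1}$), but it does lie in $\mathfrak{q}_{\mathcal{V}}\subset\mathfrak{g}$, which is all that is needed.
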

\begin{proof}
On va traiter tout d'abord les deux  exemples de \rang nul et d'indice 1 en dimensions $7$ et $8$ qui nous sera utile pour le cas général. On suppose que $\dim E=7$.  Ici $\dim F=1$, $\mathcal{V}=\{ \{0\}=V_{0} \varsubsetneq V_{1} \varsubsetneq V_{2}=V \}$, $\dim V_{1}=1$ et $\dim V_{2}=\dim V=3$. Soit $\{e_{1},e_{2},e_{3}\}$ une base de $V$ adaptée au drapeau $\mathcal{V}$ telle que, notant $e_{1}^{\ast},e_{2}^{\ast},e_{3}^{\ast}$ la base duale, on ait $\xi=e_{1}^{\ast}\wedge e_{2}^{\ast}$ et $n=\xi + u^{\ast}\wedge e_{3}^{\ast}$.\\
Dans ce cas, on a $$\mathfrak{r}_{\mathcal{V}}= \mathfrak{l}_{\mathcal{V}}\oplus \mathfrak{n}_{\mathcal{V}},$$ avec
\begin{eqnarray*}
\begin{split}
 \mathfrak{l}_{\mathcal{V}}&=\mathbb{K} H_{1}\oplus\mathbb{K} H_{2},\hbox{ o\`{u} } H_{1}=E_{11}-E_{22} \hbox{ et } H_{2}=E_{33}\\
 \mathfrak{n}_{\mathcal{V}}&=\mathbb{K} Z\oplus\mathbb{K} T,\hbox{ o\`{u} } Z=2E_{12} \hbox{ et }T=E_{32},
\end{split}
\end{eqnarray*}
et  $$\mathfrak{m}_{\mathcal{V}}=\mathbb{K}(E_{11}+E_{22})\oplus\mathbb{K}E_{23}\oplus \mathbb{K}E_{13}.$$
On pose  $g(Z)=\zeta$  et $g(T)= \tau$. Dans ce cas, on a
 $$\mathfrak{h}= \mathbb{K} H_{1}\oplus\mathbb{K}Z\oplus \mathfrak{o}_{V}\oplus \mathfrak{n}_{V}(n),$$
avec $\mathfrak{o}_{V}=\mathbb{K} (T- u\wedge_{B}e_{1})$ et $\mathfrak{n}_{V}(n)= \mathbb{K}\, u\wedge_{B}e_{3}\oplus \mathfrak{z}_{V}$.\\
\\D'après ce qui précède, tout élément $X$ de $\mathfrak{h}$ s'écrit sous la forme $$X= \alpha_{1}\,H_{1} +\alpha_{2}\,Z+ \alpha_{3}\,(T- u\wedge_{B}e_{1})+ \alpha_{4}\, u \wedge_{B}e_{3} + \sum_{1\leq i < j\leq 3} \,z_{ij}\,\, e_{i}\wedge e_{j}.\,\,\, (\star)$$
Si $X$ s'écrit comme en $(\star)$, on a $$ X \in \mathfrak{g}(g) \hbox{ si et seulement si } \left\{
                                          \begin{array}{ll}
                                            \langle g, [X,Y]  \rangle=0, & \hbox{$ \forall \,\, Y \,\, \in \mathfrak{r}_{\mathcal{V}}$} \\
                                             \langle g, [X,Y] \rangle=0 , & \hbox{$ \forall \,\, Y\,\,  \in \mathfrak{m}_{\mathcal{V}}$.}
                                          \end{array}\
                                        \right.$$
D'une part, on vérifie aisément que $\langle g, [X,\mathfrak{r}_{\mathcal{V}}]  \rangle=\{0\}$ si et seulement si $$ X=\alpha\,(T- u\,\,\wedge_{B}e_{1}-\tau\,\,u\,\, \wedge_{B}e_{3}-\frac{\tau}{2\zeta} Z)+\sum_{1\leq i< j\leq 3}z_{ij} \,\,e_{i}\wedge e_{j},\,\alpha \in \mathbb{K}.\,\,\, (\star\star)$$
D'autre part, on sait que $\ind(\mathfrak{g})=\dim\,\mathfrak{g}(g)=1$ de sorte que les $z_{ij}$ sont entièrement déterminés par $\alpha$.
Si X s'écrit comme en $(\star\star)$, on a $$ X \in \mathfrak{g}(g) \hbox{ si et seulement si }\langle g, [X,Y] \rangle=0 , \,\, \forall\,\,\, Y \in \mathfrak{m}_{\mathcal{V}}.$$
Vu que $\mathfrak{m}_{\mathcal{V}}$ est engendré par $$E_{11}+E_{22},\,\,  E_{23} \hbox{ et } E_{13},$$
il vient que $X \in \mathfrak{g}(g)$ si et seulement si $$\left\{
       \begin{array}{ll}
              (1) \langle g, [E_{11}+E_{22},X]  \rangle=0,          &\hbox{} \\
              (2) \langle g, [E_{23},X]  \rangle=0,    & \hbox{} \\
              (3) \langle g, [E_{13},X]  \rangle=0.    & \hbox{} \\

      \end{array}
\right.$$
 On vérifie alors aisément que  $$\mathfrak{g}(g)=\mathbb{K}\, D, $$
avec
\begin{eqnarray}
\begin{split}
D= T- u\,\wedge_{B}e_{1}-\tau\,\,u\,\, \wedge_{B}e_{3}-\frac{\tau}{2\zeta} Z+ \frac{1}{2}\,\tau e_{1}\wedge e_{2}+\frac{1}{2}\,\zeta e_{2}\wedge e_{3}.
\end{split}
\end{eqnarray}
Mais alors, on remarque que $[E_{11}+E_{33}, D]=D$ de sorte que $[\mathfrak{g},\mathfrak{g}(g)]\cap \mathfrak{g}(g) \neq \{0\}$.\\
On suppose maintenant  $\dim E=8$. Ici $\dim F=2$ et $\mathcal{V}=\{ \{0\}=V_{0} \varsubsetneq V_{1} \varsubsetneq V_{2}=V \}$, $\dim V_{1}=1$ et $\dim V_{2}=\dim V=3$. On peut supposer qu'il existe une base $e_{1},e_{2},e_{3}$ de $V$ adaptée à $\mathcal{V}$ et une base orthonormée $u_{1},u_{2}$ de $F$ telles que $$\xi=e_{1}^{\ast}\wedge e_{2}^{\ast} \hbox{ et } n=\xi+u_{1}^{\ast}\wedge e_{3}^{\ast}. $$
Soit $$E^{\prime}=\langle u_{2}\rangle^\bot,\, F^{\prime}=F \cap E^{\prime},\, \mathfrak{n}_{V}^{\prime}=F^{\prime}\wedge_{B}V\oplus \mathfrak{z}_{V},$$
et $\mathcal{V}^{\prime}=\mathcal{V}$ considéré comme drapeau de sous-espaces isotropes de $E^{\prime}$. Alors on a $$ \mathfrak{g}^{\prime}=\mathfrak{p}_{\mathcal{V}^{\prime}}=\mathfrak{q}_{\mathcal{V}}\oplus \mathfrak{n}_{V}^{\prime}\subset \mathfrak{p}_{\mathcal{V}}=\mathfrak{p}_{\mathcal{V}^{\prime}}\oplus\mathfrak{s}\mathfrak{o}(F)\oplus u_{2}\wedge_{B} V.$$
On note $g^{\prime}=g|_{\mathfrak{g}^{\prime}}$ et $n^{\prime}=n|_{\mathfrak{n}_{V}^{\prime}}$. D'après le numéro 5.19 de \cite{DKT}, on a
$$\mathfrak{g}^{\prime}(g^{\prime})\subset \mathfrak{h}:=\mathfrak{g}(n)=\mathfrak{g}^{\prime}(n^{\prime})\oplus \mathbb{K} u_{2}\wedge_{B}e_{3}$$
avec $$\mathfrak{g}^{\prime}(n^{\prime})=\mathbb{K}\, H_{1}\oplus\mathbb{K}\, Z \oplus \mathbb{K}\,(T-u_{1}\wedge_{B} e_{1}) \oplus \mathbb{K}\, u_{1}\wedge_{B}e_{3}\oplus\mathfrak{z}_{V}.$$
On a $$[\mathfrak{s}\mathfrak{o}(F),\mathfrak{g}^{\prime}(n^{\prime})]\subset \mathbb{K}\, u_{2}\wedge_{B} e_{1}\oplus\mathbb{K}\, u_{2}\wedge_{B} e_{3}\subset \ker(n),$$
et $$[u_{2}\wedge_{B} V,\mathfrak{g}^{\prime}(n^{\prime})]\subset u_{2}\wedge_{B} V \subset \ker(n).$$
Comme $\mathfrak{g}^{\prime}(g^{\prime})\subset\mathfrak{g}^{\prime}(n^{\prime})$, on voit alors que $\mathfrak{g}^{\prime}(g^{\prime})\subset\mathfrak{g}(g)$. Ainsi, on a $\mathfrak{g}(g)=\mathfrak{g}^{\prime}(g^{\prime})=\mathbb{K}\, D$. \\
\\Pour le cas général et compte tenu des hypothèses sur la forme linéaire $g$, on peut choisir une base ${e_{1},e_{2},...,e_{2p+1}}$  adaptée au drapeau  $\mathcal{V}$ telle que, notant $e_{1}^{\ast},...,e_{2p+1}^{\ast}$ la base duale, on ait $\xi= \sum_{1\leq i \leq p}e_{2i-1}^{\ast} \wedge e_{2i}^{\ast}$ et $n=\xi+  u^{\ast}\wedge e_{2p+1}^{\ast}$ avec $u \in F$ non isotrope. De plus, on peut supposer que $g|_{\mathfrak{m}_{\mathcal{V}}}=g|_{\mathfrak{s}\mathfrak{o}(F)}=g|_{\mathfrak{l}_{\mathcal{V}}}=0$.\\
\\Considérons, pour $1\leq k \leq p-1$, les vecteurs $C_{k}$. Remarquons tout d'abord que les vecteurs $C_{k}$ appartiennent à $\mathfrak{h}$. De plus, on a $$\langle g, [C_{k},Y]  \rangle=0,\,\forall\,\,Y\,\in \mathfrak{r}_{\mathcal{V}}\hbox{ et } [C_{k},\mathfrak{s}\mathfrak{o}(F)]=0. $$
D'autre part et compte tenu des calculs du cas précédent, on a
$$  \langle g, [C_{k},Y] \rangle=0 ,\,\, \forall \,\, Y \, \in \mathfrak{m}_{\mathcal{V}},$$
 de sorte que $ \langle g, [C_{k},Y] \rangle=0$ pour tout  $Y \, \in \mathfrak{g}$. On a donc $C_{k} \in \mathfrak{g}(g)$.\\
\\En nous inspirant de $(4.7)$ on définit le vecteur $D_{p}$ en posant $$D_{p}=T_{p}- u\wedge_{B} e_{2p-1}-\tau_{p} u\wedge_{B}e_{2p+1}-\frac{\tau_{p}}{2\zeta_{p}}\, Z_{p}+ \frac{1}{2}\tau_{p}\, e_{2p-1}\wedge e_{2p}+\frac{1}{2} \zeta_{p} e_{2p}\wedge e_{2p+1}.$$
Remarquons tout d'abord que $D_{p} \in \mathfrak{h}$ de sorte que
$$ D_{p} \in \mathfrak{g}(g) \hbox{ si et seulement si } \left\{
                                          \begin{array}{ll}
                                            \langle g, [D_{p},Y]  \rangle=0, & \hbox{$ \forall \,\, Y \,\, \in \mathfrak{r}_{\mathcal{V}}$,} \\
                                             \langle g, [D_{p},Y]  \rangle=0, & \hbox{$ \forall \,\, Y \,\, \in \mathfrak{s}\mathfrak{o}(F)$,} \\
                                             \langle g, [D_{p},Y] \rangle=0 , & \hbox{$ \forall \,\, Y\,\,  \in \mathfrak{m}_{\mathcal{V}}$.}
                                          \end{array}
                                        \right.$$
Compte tenu de $(4.4)$, on a $$\langle g,[D_{p},\mathfrak{r}_{\mathcal{V}}]\rangle=0 \hbox{ si et seulement si }
\left\{
  \begin{array}{ll}
    \langle g, [Z_{j},D_{p}]  \rangle=0, & \hbox{  $1\leq j\leq p+1$,} \\
    \langle g, [T_{j},D_{p}]  \rangle=0, & \hbox{  $1\leq j\leq p$,} \\
    \langle g, [H_{j},D_{p}]  \rangle=0, & \hbox{  $1\leq j\leq p$.}
  \end{array}
\right.
$$
On a, pour $1\leq j\leq p+1$,
$$[H_{j}, D_{p}]=\left\{
    \begin{array}{ll}
      T_{p}-\tau_{p}\,u\,\wedge_{B}e_{2p+1}\,\,\textrm{mod} (ker(n))& \hbox{ si $j=p+1$,} \\
       T_{p}-\frac{\tau_{p}}{\zeta_{p}}\, Z_{p}\,\,\textrm{mod} (ker(n)) & \hbox{ si $j=p$,} \\
      0& \hbox{ si $j\leq p-2$.}
    \end{array}
  \right.
$$
De plus, pour $1\leq j\leq p$, on vérifie aisément que $$ [D_{p}, Z_{j}]=\left\{
                                                           \begin{array}{ll}
                                                             0 \,\,\textrm{mod} (ker(n)) & \hbox{ si $j=p$,} \\
                                                             0 & \hbox{ sinon ,}
                                                           \end{array}
                                                         \right.
$$
et
$$ \,\,\,\,\,\,\,\,\,\,\,\,\,\,\,\,\,\,\,\,\,\,\,\,\,\,\,\,\,\,\,\,\,[D_{p}, T_{j}]=\left\{
                                                           \begin{array}{ll}
                                                             0 \,\,\textrm{mod} (ker(n)) & \hbox{ si $j=p$ ou $j=p-1$,} \\
                                                             0 & \hbox{ sinon,}
                                                           \end{array}
                                                         \right.
$$
de sorte que, compte tenu de nos hypothèses sur la forme linéaire $g$,  $$\langle g, [D_{p},Y]  \rangle=0, \hbox{ pour tout }  Y\in \mathfrak{r}_{\mathcal{V}}.$$
\\De plus, il est clair que, pour tout $Y\in \mathfrak{s}\mathfrak{o}(F)$, on a $[Y,D_{p}] \in ker(n)$ de sorte que  $\langle g, [D_{p}, \mathfrak{s}\mathfrak{o}(F)]  \rangle=\{0\}$.\\
\\Par suite, compte tenu de $(4.5)$, on a  $$ D_{p} \in \mathfrak{g}(g) \hbox{ si et seulement si } \left\{
                                                    \begin{array}{ll}
                                                     \,\,\, \langle g, [S_{j}, D_{p}]  \rangle=0, & \hbox{ pour $1\leq j\leq p$,} \\
                                                     \,\,\, \langle g, [E_{2j,2j+1}, D_{p}]  \rangle=0, & \hbox{ pour $1\leq j\leq p$,} \\
                                                     \,\,\, \langle g, [E_{i,j}, D_{p}]  \rangle=0, & \hbox{ pour $2\leq i+1 < j \leq 2p+1 $.}
                                                    \end{array}
                                                  \right.$$
D'une part, on a $$[S_{j}, D_{p}]=\left\{
    \begin{array}{ll}
      -T_{p}+\tau_{p}\,e_{2p-1}\wedge_{B}e_{2p}\,\,\textrm{mod} (ker(n))& \hbox{ si $j=p$,} \\
       0 & \hbox{ si $j\leq p-1$,}
    \end{array}
  \right.
$$
de sorte que, pour $1\leq k\leq p$, $$\langle g, [S_{j}, D_{p}]  \rangle=0.$$
D'autre part, on a
$$[E_{2j,2j+1}, D_{p}]=\left\{
    \begin{array}{ll}
      E_{2p,2p}-E_{2p+1,2p+1}+\frac{\tau_{p}}{2\zeta_{p}}\,E_{2p-1,2p+1}\,\,\textrm{mod} (ker(n))& \hbox{ si $j=p$,}\\
       -\frac{\tau_{p}}{2\zeta_{p}}\,E_{2p-2,2p}\,\textrm{mod} (ker(n)) & \hbox{ si $j=p-1$,} \\
      0 & \hbox{ si $j\leq p-2$.}
    \end{array}
  \right.
$$
Comme $E_{2p,2p}-E_{2p+1,2p+1} \in \mathfrak{l}_{\mathcal{V}}+\mathfrak{m}_{\mathcal{V}}$ et $E_{2p-1,2p+1},E_{2p-2,2p} \in \mathfrak{m}_{\mathcal{V}} $, on voit alors que, pour $1\leq j\leq p$,   $$\langle g, [E_{2j,2j+1}, D_{p}]  \rangle=0.$$
Enfin, pour calculer le crochet $[E_{i,j},D_{p}]$ avec $2\leq i+1 < j \leq r$, on distingue suivant la parité de $i$ et $j$.\\
\\(i) Pour $i=2k^{\prime}$ et $j=2l^{\prime}$ avec $k^{\prime}+1 \leq  l^{\prime} \leq  p$, on a
$$[E_{2k^{\prime},2l^{\prime}}, D_{p}]=\left\{
    \begin{array}{ll}
      -\frac{1}{2}\,\tau_{p}\,\,e_{2k^{\prime}}\wedge e_{2p-1}+\frac{1}{2}\,\,\zeta_{p}\,\,e_{2k^{\prime}}\wedge e_{2p+1}=0\,\,\textrm{mod} (ker(n))& \hbox{ si $l^{\prime}=p$, } \\
       0\,& \hbox{ sinon, }
    \end{array}
  \right.$$
de sorte que $$\langle g, [E_{2k^{\prime},2l^{\prime}}, D_{p}]\rangle=0.$$
(ii) Pour $i=2k^{\prime}$ et $j=2l^{\prime}+1$ avec $1 \leq k^{\prime}<   l^{\prime} \leq  p$, on a
$$[E_{2k^{\prime},2l^{\prime}+1}, D_{p}]=\left\{
    \begin{array}{ll}
      E_{2k^{\prime},2p}-\tau_{p}\,\,u\wedge_{B} e_{2k^{\prime}}-\frac{1}{2}\,\,\zeta_{p}\,\,e_{2k^{\prime}}\wedge e_{2p}& \hbox{ si $l^{\prime}=p$, } \\
       -u\wedge_{B} e_{2k^{\prime}}-\frac{\tau_{p}}{2\zeta_{p}}\,\, E_{2k^{\prime},2p}+\frac{1}{2}\,\tau_{p}\,e_{2k^{\prime}}\wedge e_{2p}& \hbox{ si $l^{\prime}=p-1$, }\\
0  & \hbox{ sinon.}
\end{array}
  \right.$$
Remarquons que $[E_{2k^{\prime},2l^{\prime}+1}, D_{p}]\in \mathfrak{m}_{\mathcal{V}}+\ker(n)$ de sorte que $$\langle g, [E_{2k^{\prime},2l^{\prime}+1}, D_{p}]=0\rangle.$$
(iii) Pour $i=2k^{\prime}+1$ et $j=2l^{\prime}$ avec $1\leq k^{\prime}+1 <  l^{\prime} \leq  p$, on a
$$[E_{2k^{\prime}+1,2l^{\prime}}, D_{p}]=\left\{
    \begin{array}{ll}
      -\frac{1}{2}\,\,\tau_{p}\,\,e_{2k^{\prime}+1}\wedge e_{2p-1}+ \frac{1}{2}\,\,\zeta_{p}\,\,e_{2k^{\prime}+1}\wedge e_{2p+1}& \hbox{ si $l^{\prime}=p$,} \\
0  & \hbox{ sinon.}
\end{array}
  \right.$$
On voit alors que $$[E_{2k^{\prime}+1,2l^{\prime}}, D_{p}]=\left\{
    \begin{array}{ll}
      0 \, \textrm{mod}(\ker(n))& \hbox{ si $l^{\prime}=p$, } \\
0 & \hbox{ sinon, }
\end{array}
  \right.$$
de sorte que $$\langle g, [E_{2k^{\prime}+1,2l^{\prime}}, D_{p}]\rangle=0.$$

(iv) Pour $i=2k^{\prime}+1$ et $j=2l^{\prime}+1$ avec $0 \leq k^{\prime} < l^{\prime} \leq  p$, on a
$$[E_{2k^{\prime}+1,2l^{\prime}+1}, D_{p}]=\left\{
    \begin{array}{ll}
      E_{2k^{\prime}+1,2p}-\tau_{p}\,\,u\wedge_{B} e_{2k^{\prime}+1}-\frac{1}{2}\,\,\zeta_{p}\,\,e_{2k^{\prime}+1}\wedge e_{2p}& \hbox{ si $l^{\prime}=p$, } \\
       -u\wedge_{B} e_{2k^{\prime}+1}-\frac{\tau_{p}}{2\zeta_{p}}\,\, E_{2k^{\prime}+1,2p}+\frac{1}{2}\,\tau_{p}\,e_{2k^{\prime}+1}\wedge e_{2p}& \hbox{ si $l^{\prime}=p-1$, }\\
0& \hbox{ sinon.}
\end{array}
  \right.$$
Par suite, on a les résultats suivants:\\
 \\- Pour $ l^{\prime}< p-1$, on a
$$[E_{2k^{\prime}+1,2l^{\prime}+1}, D_{p}]=0.$$
- Pour $ l^{\prime}=p-1$, on a
$$[E_{2k^{\prime}+1,2l^{\prime}+1}, D_{p}]\in \mathfrak{m}_{\mathcal{V}}+ker(n).$$
- Pour $ l^{\prime}=p$ et $k^{\prime}<p-1$
$$[E_{2k^{\prime}+1,2l^{\prime}+1}, D_{p}] \in \mathfrak{m}_{\mathcal{V}}+ker(n).$$
- Pour $l^{\prime}=p$ et $k^{\prime}=p-1$, on a
\begin{eqnarray*}
\begin{split}
[E_{2k^{\prime}+1,2l^{\prime}+1}, D_{p}]&= E_{2p-1,2p}-\tau_{p}\,\,u\wedge_{B} e_{2p-1}-\frac{1}{2}\,\zeta_{p}\,\,e_{2p-1}\wedge e_{2p}\\
&=\frac{1}{2}\,\,Z_{p}-\frac{1}{2}\,\,\zeta_{p}\,\,e_{2p-1}\wedge e_{2p}\,\,\textrm{mod}(ker(n)).\\
\end{split}
\end{eqnarray*}
Compte tenu  de nos hypothèses, il suit que
$$\langle g, [E_{2k^{\prime}+1,2l^{\prime}+1}, D_{p}]\rangle=0,\,\,\,\,\,0 \leq k^{\prime} < l^{\prime} \leq  p.$$
D'après ce qui précède, on voit que  $$\langle g, [D_{p},Y]  \rangle=0, \hbox{ pour tout }  Y\in \mathfrak{g}.$$
On a donc  $D_{p} \in \mathfrak{g}(g)$.\\
\\Remarquons que la famille $\{C_{k}, 1\leq k \leq p-1, D_{p}\}$ est une famille libre de $\mathfrak{g}(g)$. De plus, comme $\dim(\mathfrak{g}(g))=h(\mathcal{V})=p$, cette famille est maximale et donc elle forme une base de $\mathfrak{g}(g)$. On a donc $$\mathfrak{g}(g)=\bigoplus_{k=1}^{p-1}\mathbb{K} C_{k} \oplus \mathbb{K} D_{p}.$$
On pose $R_{k}=E_{2k-1,2k-1}+E_{2k+1,2k+1}$. On a $R_{k} \in \mathfrak{g}$ pour tout  $1\leq k \leq p$ et on a
$$[R_{k},C_{k}]=C_{k},\,\,1\leq k\leq p-1 \hbox{ et } [R_{p},D_{p}]=D_{p},$$
de sorte  que  $[\mathfrak{g},\mathfrak{g}(g)]\cap \mathfrak{g}(g) \neq \{0\}$. Ceci montre alors que $\mathfrak{g}$ est non stable puisque la forme linéaire $g$ est générique.
\end{proof}
Pour achever la démonstration du théorème, on considère le cas o\`{u} $(q,r)$ vérifie $(4.1)$ et le drapeau $\mathcal{V}$ vérifie la condition $(\ast)$ sans que les sous-espaces $V_{i}$ du drapeau soient tous de dimension impaire pour $1\leq i\leq t-1$. On reprend les notations des cas précédents.\\
Soit donc $\mathfrak{g}=\mathfrak{p}_{\mathcal{V}}$ avec $\mathcal{V}=\{ \{0\}=V_{0} \varsubsetneq V_{1} \varsubsetneq V_{2}\varsubsetneq...\varsubsetneq V_{t-1} \varsubsetneq V_{t}=V\}$ un drapeau de sous-espaces isotropes  vérifiant la condition $(\ast)$ et  $(q,r)$ la condition $(4.3)$. En particulier $\dim F\leq 2$ et si  $\dim F= 2$, $r=\dim V$ est impair.\\
On a $\mathfrak{g}=(\mathfrak{q}_{\mathcal{V}}\times \mathfrak{s}\mathfrak{o}(F))\oplus \mathfrak{n}_{V}$ avec $\mathfrak{n}_{V}=F \wedge_{B} V \oplus \mathfrak{z}_{V}$ et  $\mathfrak{z}_{V}=\Lambda^{2}_{B}V$. Soit $g \in \mathfrak{g}^{\ast}$ générique. On peut supposer que $\xi=g|_{\mathfrak{z}_{V}} \in \Lambda^{2}_{B}V^{\ast}$ est générique relativement à $\mathcal{V}$ et, si $r$ est impair que $n=g|_{\mathfrak{n}_{V}}$ est tel que $n=n_{\xi}+ l \wedge e_{r}^{\ast}$ avec $l \in F^{\ast}$ non isotrope.
Soit $k_{0}=0 < k_{1}<...< k_{s-1} < k_{s}=t$ ceux des indices $j\in {0,...,t}$ tels que $\dim V_{j}$ soit pair  ou $j=t$. Pour $1\leq i \leq s$, on pose $W_{i}=V_{k_{i}}\cap V_{k_{i-1}}^{\perp_{\xi}}$ et on désigne par $\mathcal{V}_{i}$ la trace du drapeau $\mathcal{V}$ sur le sous-espace $W_{i}$. On a $V=\oplus_{i=1}^{s}W_{i}$.\\
\\Soit  $\widetilde{\mathcal{V}}\subset \mathcal{V}$ le sous-drapeau constitué des espaces $V_{k_{i}}$, $0 \leq i \leq s $. On a $$\mathfrak{q}_{\mathcal{V}}=\prod_{i=1}^{s}\mathfrak{q}_{\mathcal{V}_{i}}\oplus\, ^{u}\mathfrak{q}_{\widetilde{\mathcal{V}}}$$
o\`{u} $\mathfrak{q}_{\mathcal{V}_{i}}$ désigne la sous-algèbre parabolique de $\mathfrak{g}\mathfrak{l}(W_{i})$ qui stabilise le drapeau $\mathcal{V}_{i}$ et $^{u}\mathfrak{q}_{\widetilde{\mathcal{V}}}$ le radical unipotent de la sous-algèbre parabolique $\mathfrak{q}_{\widetilde{\mathcal{V}}}$.
Comme $$  \mathfrak{h}+ \mathfrak{n}_{V}=\left\{
                                           \begin{array}{ll}
                                             \mathfrak{r}_{\mathcal{V}}+ \mathfrak{n}_{V} & \hbox{ si r est pair ou $F=0$ } \\
                                             \mathfrak{r}_{\mathcal{V}}^{0}+ \mathfrak{n}_{V} & \hbox{ si r est impair et $F\neq 0$.
}
                                           \end{array}
                                         \right.
$$
et $exp(\mathfrak{n}_{V}(n)).g=g+(\mathfrak{h}+\mathfrak{n}_{V})^{\perp}$, on peut supposer que $g|_{^{u}\mathfrak{q}_{\widetilde{\mathcal{V}}}}=0$.\\
\\On pose $\tilde{s}=s$, si $F=0$ ou $r$ est pair, et $\tilde{s}=s-1$, si $F\neq 0$ et $r$ est impair. On considère les sous-algèbres de Lie de $\mathfrak{g}$, $\mathfrak{g}_{i}=\mathfrak{q}_{\mathcal{V}_{i}}\oplus\mathfrak{z}_{W_{i}}$, pour $1\leq i \leq \tilde{s}$ et, si $F\neq 0$ et $r$ est impair, $\mathfrak{g}_{s}=\mathfrak{q}_{\mathcal{V}_{s}}\times \mathfrak{s}\mathfrak{o}(F) \oplus\mathfrak{n}_{W_{s}}$ (en convenant que $\mathfrak{n}_{W_{s}}=F\wedge_{B}W_{s}\oplus \mathfrak{z}_{W_{s}}$). Enfin, on note $g_{i}=g|_{\mathfrak{g}_{i}}$, $1\leq i \leq s$. Alors, on a
$$\mathfrak{g}=\mathfrak{g}_{i}\oplus(\prod_{j\neq i}\mathfrak{q}_{\mathcal{V}_{j}})\oplus \,^{u}\mathfrak{q}_{\widetilde{\mathcal{V}}} \oplus F\wedge_{B}V \oplus(\oplus_{k< l}W_{k}\wedge_{B}W_{l}) \oplus(\oplus_{j\neq i}\wedge_{B}^{2} W_{j}), \,\,1\leq i \leq \tilde{s},$$
et si $F\neq 0$ et $r$ est impair, on a
$$\mathfrak{g}=\mathfrak{g}_{s}\oplus(\prod_{j< s}\mathfrak{q}_{\mathcal{V}_{j}})\oplus \,^{u}\mathfrak{q}_{\widetilde{\mathcal{V}}} \oplus( \oplus_{j< s} F\wedge_{B}W_{j}) \oplus(\oplus_{k< l}W_{k}\wedge_{B}W_{l}) \oplus(\oplus_{j<s} \wedge_{B}^{2}W_{j}).$$
En regardant les crochets de $\mathfrak{g}_{i}$ avec les différents autres sous-espaces apparaissant dans cette décomposition de $\mathfrak{g}$ en somme directe, on peut montrer que $\mathfrak{g}_{i}(g_{i})\subset \mathfrak{g}(g)$.\\
Compte tenu de nos hypothèses, il existe deux sous-espaces consécutifs du drapeau $\mathcal{V}$ qui sont tous deux de dimension impaire. Ainsi l'une au moins des $\mathfrak{g}_{i}$ n'est pas stable d'après le cas précédent. Comme $\mathfrak{g}_{i}(g_{i})\subset \mathfrak{g}(g)$, il résulte alors du lemme \ref{Tauvel Yu} que $\mathfrak{g}$ n'est pas stable.
\end{proof}
\begin{rema}
On garde les notations précédentes. Comme la forme linéaire $g$ est générique, il existe un ouvert de Zariski $\mathrm{U}$ $G$-invariant contenant $g$. Ainsi et comme
$g \in$ $^{u}\mathfrak{q}_{\widetilde{\mathcal{V}}}^{\bot}$, o\`{u}  $^{u}\mathfrak{q}_{\widetilde{\mathcal{V}}}^{\bot}$ désigne l'orthogonal de $^{u}\mathfrak{q}_{\widetilde{\mathcal{V}}}$ dans $\mathfrak{g}^{\ast}$, $\mathrm{U}\cap\,^{u}\mathfrak{q}_{\widetilde{\mathcal{V}}}^{\bot}$ est un ouvert de Zariski non vide de $^{u}\mathfrak{q}_{\widetilde{\mathcal{V}}}^{\bot}$. Soit $\varphi$ la projection de $^{u}\mathfrak{q}_{\widetilde{\mathcal{V}}}^{\bot}$ dans $\mathfrak{g}_{i}^{\ast}$. Comme $\varphi(\mathrm{U}\cap\,^{u}\mathfrak{q}_{\widetilde{\mathcal{V}}}^{\bot})$ est un ouvert de Zariski non vide de $\mathfrak{g}_{i}^{\ast}$, on voit alors que $g_{i}=g|_{\mathfrak{g}_{i}}$ est générique. Ainsi et compte tenu du cas précédent, on a une formule explicite pour le stabilisateur $\mathfrak{g}_{i}(g_{i})$.\\
Lorsque $\mathfrak{g}$ est de rang nul, en utilisant le résultat qui donne l'indice de $\mathfrak{g}$ et des $\mathfrak{g}_{i}$ dans le théorème $\ref{D.K.T}$, on déduit que $\mathfrak{g}(g)=\oplus_{i}\mathfrak{g}_{i}(g_{i})$. Comme on a déterminé explicitement $\mathfrak{g}_{i}(g_{i})$ dans le cas précédent, on a alors une formule explicite pour le stabilisateur d'une forme linéaire générique.
\end{rema}

\bibliographystyle{smfplain}
\bibliography{biblio}

\end{document}